\DeclareMathOperator{\red}{Red}
\DeclareMathOperator{\rpp}{rpp}
\DeclareMathOperator{\defect}{Defect}
\newcommand{\Push}{\mathscr{P}}
\newcommand{\Bump}{\mathscr{B}}
\newcommand{\Delete}{\mathscr{D}}
\newcommand{\PD}{\mathscr{P}\!\!\!\mathscr{D}}
\newcommand{\BD}{\mathscr{B}\!\mathscr{D}}
\newcommand{\IB}{\mathscr{I}\!\!\!\mathscr{B}}
\newtheorem{theorem}{Theorem}[section]
\newtheorem{definition}[theorem]{Definition}
\newtheorem{lemma}[theorem]{Lemma}
\newtheorem{corollary}[theorem]{Corollary}
\newtheorem{proposition}[theorem]{Proposition}
\newtheorem{algorithm}[theorem]{Algorithm}
\newcommand{\bfabarbar}{\overline{\overline{\mathbf{a}}}}
\newcommand{\bfa}{\mathbf{a}}
\newlength{\figwidth}
\begin{document}
\title[Macdonald's distribution on Reduced Words]{A Markov growth process for Macdonald's distribution on reduced words}
\author{Benjamin Young}

\begin{abstract}
We give an algorithmic-bijective proof of Macdonald's reduced word identity in the theory of Schubert polynomials, in the special case where the permutation is dominant.  Our bijection uses a novel application of David Little's generalized bumping algorithm.  We also describe a Markov growth process for an associated probability distribution on reduced words.  Our growth process can be implemented efficiently on a computer and allows for fast sampling of reduced words.  We also discuss various partial generalizations and links to Little's work on the RSK algorithm.
\end{abstract}

\maketitle

\section{Introduction and notation}
\label{sec:intro}

The theory of Schubert polynomials has many beautiful identities which lack bijective proofs.  In this paper, we will give an algorithmic bijection which proves a special case of Macdonald's identity.  In order to state this identity, we must first review several standard definitions from the literature.  

\subsection{Permutations, reduced words and wiring diagrams}
\label{subsec:permutations}

    Let $S_n$ denote the symmetric group on $n$ elements, and let $\pi \in S_n$ be a permutation.  We will usually represent permutations in \emph{one--line notation} -- that is, by listing $\pi(1), \pi(2), \ldots, \pi(n)$, omitting the commas when giving explicit examples.  The \emph{permutation matrix} of $\pi$ is the zero-one matrix $M$ with ones in position $(\pi(i), i)$ for $1 \leq i \leq n$, and zeroes elsewhere.

    Let $s_i$ denote the \emph{elementary transposition} $(i, i+1)$ for $1 \leq i < n$.  Any sequence of positive integers $(a_i)_{1 \leq i \leq k}$ in the range $1 \leq a_i \leq n$ is called a \emph{word}; moreover, if $\bfa = (a_1, a_2, \ldots, a_k)$ is a word such that $\pi = s_{a_{1}}s_{a_2}\cdots s_{a_k}$, and if $k$ is equal to the number of inversions of $\pi$, then we say $\bfa$ is a \emph{reduced word} or \emph{reduced decomposition} for $\pi$.  
    
    One can represent the reduced word $\bfa=(a_1, \ldots, a_k)$ for $\pi$ by a \emph{wiring diagram}, as follows.  For $0 \leq t \leq k$, define the \emph{partial permutations}
\[\pi_t = \prod_{i=1}^t s_{a_i},\]
and observe that $\pi_0$ is the identity and $\pi_k = \pi$.  The $i$th \emph{wire} of $\bfa$ is defined to be the piecewise linear path joining the points $(i, \pi_t(i))$, $0 \leq t \leq k$.  We will consistently use matrix coordinates rather than cartesian coordinates in this paper, so that in an ordered pair $(i,j)$, $i$ refers to the row number (measured from the top) whereas $j$ refers to the column number measured from the left.  The \emph{wiring diagram} is the union of all of the $n$ wires.  
For each $t \geq 1$, observe that between column $t-1$ and $t$, precisely two wires $w_1, w_2$ intersect, froming an X in the wiring diagram; this configuration is called a \emph{crossing}.  One can identify a crossing either by its position $t$, or by the unordered pair $\{w_1,w_2\}$ of wires which are involved (since the word is reduced); the crossing is said to be at \emph{height $a_t$}.  As such, one can alternately construct a wiring diagram by first drawing the crossings at height $a_t$, and then joining them with horizontal line segments to form the wires.

For example: if $\pi$ is the permutation 4213, then $\pi$ has four inversions, so its reduced words are of length four.  One of these reduced words is $\bfa=(3,1,2,1)$.  The permutation matrix for $\pi$, and the wiring diagram for $\bfa$, are, respectively, 
\begin{center}
$
\displaystyle
\left[
\begin{matrix}
0 & 0 & 1 & 0 \\
0 & 1 & 0 & 0 \\
0 & 0 & 0 & 1 \\
1 & 0 & 0 & 0
\end{matrix}
\right]$
and
\raisebox{-1.5 \baselineskip}{ \includegraphics[height=0.6in]{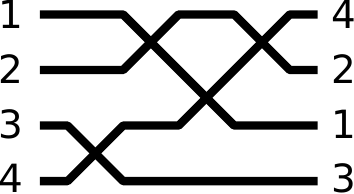}.  }
\end{center}
We would say, that for example, that the crossing of wires 1 and 4 occurs at position 3 and height 2.

\subsection{Dominant permutations and Macdonald weight}
We now define several fundamental concepts from the theory of Schubert polynomials, starting with a classical definition of Rothe~\cite{rothe} and used subsequently by many authors~\cite{lascoux-schutzenberger, manivel, macdonald, lascoux-book}.
\begin{definition}
    Let $\pi$ be a permutation with permutation matrix $M$.  The \emph{Rothe diagram} of $\pi$ is the collection of cells $(i,j)$ such that $M_{i',j} = 0$ for all $i' \leq i$ , and $M_{i,j'} = 0$ for all $j' \leq j$.  
\end{definition}
In other words, the Rothe diagram of $\pi$ is the set of cells which remain after striking out all entries directly below or directly to the right of each 1 in the permutation matrix of $\pi$.  It is customary to draw the Rothe diagram of $\pi$ as a collection of unit squares in the plane.  It is easy to check that $(i,j)$ is an inversion of $\pi$ if and only if $(\pi(j), i)$ is a cell in the Rothe diagram for $\pi$.  The crossings in a reduced word also represent inversions, so one way to represent a reduced word is to label the cells of the Rothe diagram with the numbers $1$ through $k$ where the corresponding crossing is to be found in the reduced word.  These are the so-called ``labelled circle diagrams'' of~\cite{edelman-greene, fomin-greene-reiner-shimozono, saga}; we will not need to use them in this paper.

Here are two examples of Rothe diagrams, for the permutations 4213 (left), whose Rothe diagram consists of the cells $\{(1,1), (1,2), (2,1), (2,2)\}$, and 2413 (right), whose Rothe diagram consists of the cells $\{(1,1), (1,2), (3,2)\}$.  In both cases the Rothe diagrams consist of the unshaded cells.

\begin{center}
\includegraphics[height=0.75in]{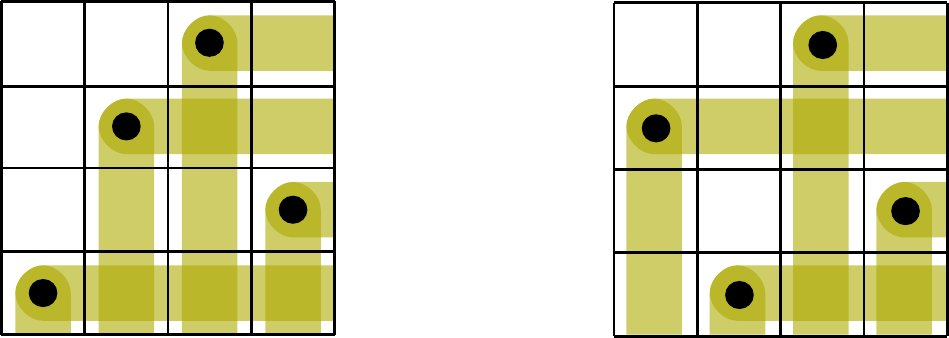}
\end{center}

\begin{definition}
    The permutation $\pi$ is \emph{dominant} if the Rothe diagram of $\pi$ is a Young diagram - that is, if the Rothe diagram of $\pi$ is either empty or if it contains only one connected component of cells in the upper left hand corner of the permutation matrix.
\end{definition}

For example, the longest element in $S_n$ is the permutation $n,n-1, \ldots, 3,2,1$ in one-line notation.  It is dominant; its diagram is the staircase Young diagram with $n-1$ stairs.  In the example above, 4213 is a dominant permutation, whereas 2413 is not.  It is clear that dominant permutations avoid the pattern $132$; in fact, dominance is equivalent to 132-avoidance~\cite{macdonald}.

The object of study in this paper is the following weight function. 
\begin{definition}  
\label{definition:Macdonald weight}
Let $\bfa = (a_1, a_2, \ldots)$ be a word.  Define
$\mu(\bfa) = \prod_t  a_t.$  We call $\mu(\bfa)$ the \emph{Macdonald weight} of $\bfa$.
\end{definition} 

Generally, we will take $\bfa$ to be a reduced word for the dominant permutation $\pi \in S_n$.  The Macdonald-weighted count of reduced words for such a $\pi$ has a particularly simple expression, due to Macdonald~\cite{macdonald}.
\begin{theorem}\cite[Equation (6.11)]{macdonald}  
\label{thm:macdonald}
If $\pi$ is a dominant permutation whose Rothe diagram is $\lambda \vdash k$, then 
\begin{equation}
\label{eqn:macdonald}
\sum_{\bfa \in \red(\pi)} \mu(\bfa) = k!.
\end{equation}
\end{theorem}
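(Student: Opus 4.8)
The plan is to recast the identity as a statement about the fibers of an explicit surjection. Since the right-hand side is $k!$, the natural combinatorial model is the set $X$ of \emph{subexcedant sequences} $(d_1, d_2, \ldots, d_k)$ with $1 \le d_t \le t$, of which there are exactly $\prod_{t=1}^k t = k!$. On the other hand, the left-hand side counts \emph{decorated reduced words}: pairs $(\bfa, \mathbf{c})$ where $\bfa = (a_1, \ldots, a_k) \in \red(\pi)$ and $\mathbf{c} = (c_1, \ldots, c_k)$ with $1 \le c_t \le a_t$, since the number of admissible $\mathbf{c}$ for a fixed $\bfa$ is precisely $\prod_t a_t = \mu(\bfa)$. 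Thus the theorem is equivalent to exhibiting a bijection $\Phi \colon X \to \{(\bfa, \mathbf{c})\}$, for then $k! = |X| = \sum_{\bfa \in \red(\pi)} \mu(\bfa)$. I would aim for $\Phi$ to be computed by reading $d_1, d_2, \ldots, d_k$ from left to right, maintaining after $t$ steps a reduced word for a dominant permutation whose diagram has $t$ cells; this is exactly the ``Markov growth process'' alluded to in the abstract.

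Before committing to a bijection I would first test the obvious inductive recursion, if only to locate the real difficulty. Grouping reduced words by their last letter gives
\[
\sum_{\bfa \in \red(\pi)} \mu(\bfa) = \sum_{i} i \sum_{\bfa' \in \red(\pi s_i)} \mu(\bfa'),
\]
the outer sum running over right descents $i$ of $\pi$. If each $\pi s_i$ were again dominant with a diagram of $k-1$ cells, the inner sums would all equal $(k-1)!$ by induction, and one would only need $\sum_i i = k$. For dominant $\pi$ the right descents sit at the corner rows (those $i$ with $\lambda_i > \lambda_{i+1}$), and indeed $\sum_i i = k$ holds for staircase shapes; but it already fails for $\lambda = (2,2)$, where $\pi = 3412$ has its unique right descent at $i = 2$ while $k = 4$. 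The reason is that $\pi s_i$ need not be dominant: here $3412\,s_2 = 3142$, whose code $(2,0,1,0)$ is not weakly decreasing. This is the crux of the matter — deleting a right descent escapes the class of dominant permutations, so no induction internal to that class can succeed, and a genuinely global device is required.

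This is where Little's generalized bumping algorithm enters, and I would use it to \emph{add} a cell to the diagram rather than delete one. Fix a growth of the shape, $\emptyset = \lambda^{(0)} \subset \lambda^{(1)} \subset \cdots \subset \lambda^{(k)} = \lambda$, adjoining one box at each stage. At step $t$ I would insert a new crossing into the current wiring diagram at a location prescribed by $d_t$, and then apply Little's push to resolve the resulting configuration into a reduced word for the dominant permutation attached to $\lambda^{(t)}$. The essential virtue of the push is precisely that it repairs the failure found above: although a raw insertion leaves the dominant class, the push cascades crossings until dominance (equivalently, $132$-avoidance) is restored, all while preserving reducedness and increasing the length by exactly one. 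Reading the whole sequence $(d_1, \ldots, d_k)$ then produces a reduced word $\bfa$ for $\pi$ together with the record $\mathbf{c}$ of the sub-choices made during the pushes, and this pair is the image $\Phi(d_1,\ldots,d_k)$.

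The verification splits into parts of increasing difficulty. First, one must check that the process is well defined and lands inside the dominant class at every stage, which is a structural property of the push. Second, injectivity, which I would establish by constructing an explicit inverse (an ``un-bumping'' that strips off the last-added box and reconstructs $d_t$), together with surjectivity, which should follow because every reduced word of $\pi$ is reachable by some growth. The hard part, and the true content of the argument, is the third: the \emph{weight bookkeeping}. One must show that at step $t$ the number of admissible insertion positions is exactly $t$ (supplying the factor that assembles into $k!$), while the crossing that finally stabilizes does so at a height $a_t$ whose occurrences are matched in an $a_t$-to-one fashion by the decoration $c_t \in \{1, \ldots, a_t\}$; equivalently, that the number of $(d_1,\ldots,d_k) \in X$ whose image has reduced word $\bfa$ is exactly $\mu(\bfa)$. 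Making this local count exact through each cascade of pushes is where I expect essentially all of the work to lie.
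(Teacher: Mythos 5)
You have correctly reconstructed the skeleton of the paper's argument: growing the word along a chain $\emptyset = \lambda^{(0)} \subset \cdots \subset \lambda^{(k)} = \lambda$ of dominant shapes, inserting a crossing at one of $t$ positions at step $t$ (your set $X$ is exactly the paper's $I_k$), and resolving with Little's bump. But your proposal stops precisely at the statement that carries the entire content, and you say so yourself: you must show that the number of sequences $(d_1,\ldots,d_k)$ producing a given $\bfa$ is exactly $\mu(\bfa)$, and you have no mechanism for this. Worse, the mechanism you sketch --- that each step contributes a \emph{local, per-letter} decoration $c_t \in \{1,\ldots,a_t\}$ attached to the letters of the final word, matched ``in an $a_t$-to-one fashion'' --- is not how the multiplicity arises and is not something the bump delivers. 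A single insertion-and-bump moves many crossings, and there is no natural way to charge the fiber size to individual letters of $\bfa$; the fiber count is a global statement about the composite of $k$ bumps, and no letter-by-letter bookkeeping through the cascades is established (or, to my knowledge, true in the form you state).

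The paper fills this gap with two ideas absent from your proposal. First, a linearization trick: work in $\mathbb{C}W$ and replace the push by $\PD_t\bfa = \Push^{\uparrow}_t\bfa + \Delete_t\bfa$, which preserves $\mu$ for the trivial reason $a_tX = (a_t-1)X + X$; iterating along an \emph{upward} Little bump and collecting the deleted intermediate words defines the bump-delete map $\BD$, whose summands are all reduced words for the smaller dominant permutation. Second, a dominance lemma: if $\pi$ is dominant, the upward bump must terminate by pushing a crossing onto the zero line (this is exactly where $132$-avoidance is used, echoing the obstruction you found with $\lambda = (2,2)$), so the leftover term $\Bump^{\uparrow}_t\bfa$ has Macdonald weight $0$ and $\mu(\BD_t\bfa) = \mu(\bfa)$ exactly. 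Your insertion step is then recovered as the \emph{adjoint} $\IB$ of $\BD$ with respect to the inner product making words orthonormal, and weight preservation transfers through adjointness: $\mu(\bfa)$ equals the number of maximal paths to $\bfa$ in the growth graph, while constant outdegree $t$ at rank $t-1$ gives $k!$ paths in total. Note also a small conceptual slip: the downward bump restores \emph{reducedness}, not dominance --- the target permutation $\pi_m$ is dominant by construction of the chain. So your approach is the paper's approach in outline, but without the $\PD$ decomposition and the height-zero termination lemma it does not close, and the hard step you defer is precisely the theorem.
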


\subsection{Results}
Our main result is a bijective proof of Theorem~\ref{thm:macdonald}.  The bijection is given in Definition~\ref{def:the big bijection}, and the proof of bijectivity given immediately follows it.  

Fomin-Kirillov~\cite{fomin-kirillov} mention that there is a ``complicated'' bijective proof of this identity which has not since appeared in print.  Our bijection is surely not complicated; it is in fact an \emph{algorithmic} bijection, given by iterated use of the insert-bump maps $\IB$ described in Definition~\ref{def:bump_delete and insert_bump}.  These, in turn, are a novel application of Little's bumping algorithm~\cite{little1}.  Our bijection interprets the left side of Equation~\ref{eqn:macdonald} as the number of maximal-length paths in a certain ranked, multiple-edged directed graph $\Lambda_T$, which represents the outcomes of perfoming the $\IB$ maps (see Section~\ref{sec:bijection}).  The parameter $T$ is a standard Young tableau of shape $\lambda$, chosen arbitrarily (!).  The nodes in $\Lambda_T$ correspond to reduced words, and the edges correspond to outcomes of $\IB$.  Moreover, it is evident that $\Lambda_T$ has outdegree $n+1$ at rank $n$, which is enough to establish the identity.  

This bijection allows us to solve a second problem, suggested to the author by Alexander Holroyd: find an efficient algorithm for randomly generating a reduced word $\bfa$ for a dominant permutation $\pi$ with probability proportional to $\mu(\bfa)$.  If the length of $\pi$ is $k$ (so that reduced words for $\pi$ have $k$ inversions), then the constant of proportionality is $k!$.  

To sample from $\mu$, we perform a simple random walk in the graph $\Lambda_T$.  When performing this random walk, we start at the node corresponding to the empty word, and add one crossing at a time using the $\IB$ map of Definition~\ref{def:bump_delete and insert_bump}.  At each step, the result is $\mu$-distributed.  The insertion-bumping process is illustrated in Figure~\ref{fig:one step of the shuffle}, and all possible outcomes of several steps of the growth rule are shown in Figure~\ref{fig:macdonald shuffle graph}.  
The rule for adding crossings is a random Markov step.  It is also \emph{local} in both space and time: to write down a word with $k$ crossings, one must first generate a word with $k-1$ crossings, but no data other than this length $k-1$ word need be retained.  Moreover, only part of the word changes, and that by a small amount.  That is, the word \emph{grows} slowly.  We propose the term \emph{Markov growth process} for such rules.  These processes occur in many other fields of mathematics.  They are sometimes called \emph{building schemes}~\cite{winkler}.  In the particular case where the object being grown in is a perfect matching on a planar graph, they are sometimes called \emph{domino shuffling algorithms}~\cite{EKLP, borodin-gorin, nordenstam-young}. 

\subsection{Generalizations and literature review}

The theorem which we prove bijectively has several generalizations and extensions in the literature which currently lack bijective proofs.  

In fact, Macdonald proves considerably more than Theorem~\ref{thm:macdonald} in \cite[Equation (6.11)]{macdonald}, which is stated for arbitrary permutations $\pi$.  In this more general setting, the right hand side becomes $\mathfrak{S}_{\pi}(1)$: the number of terms in the Schubert polynomial associated to $\pi$.  This formula is commonly called \emph{Macdonald's formula} in the literature on Schubert polynomials.  Schubert polynomials are combinatorial objects which encode the intersection theory of the flag variety.  These polynomials were discovered in by Lascoux-Schutzenberger~\cite{lascoux-schutzenberger}, and have been actively studied and generalized by many mathematicians over the next forty years.  For introductions to Schubert polynomials, see ~\cite{macdonald, manivel, lascoux-book}.  However, for the immediate purpose of reading this paper, it is unnecessary for the reader to be familiar with the theory of Schubert polynomials.  This is essentially because in our setting, when $\pi$ is a dominant permutation, the number of terms in the Schubert polynomial is equal to one~\cite{macdonald}.  

\begin{figure}
\caption{One step of the Markov growth process.  A crossing is inserted above the marked point, and then a \emph{Little bump} is performed at the new crossing.  
\label{fig:one step of the shuffle}}
\begin{center}
\includegraphics[width=\figwidth]{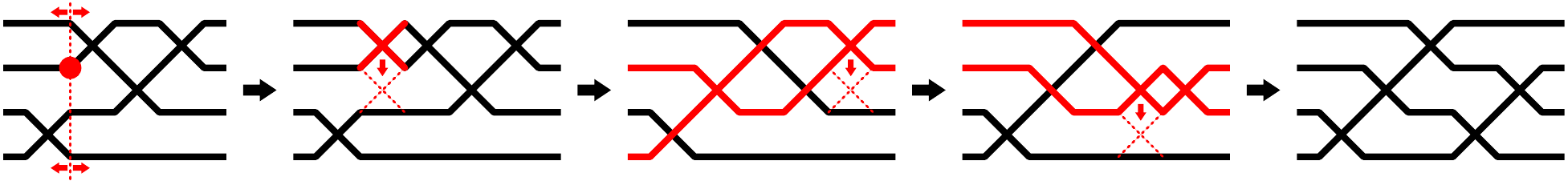}
\end{center}
\end{figure}

A curious extension of Theorem~\ref{thm:macdonald} was published in~\cite[Theorem 1.1]{fomin-kirillov}: 
\begin{proposition} 
\label{prop:fk}
If $\pi$ is a dominant permutation whose Rothe diagram is $\lambda \vdash k$, and $x \in \mathbb{N}$, then
\[
\sum_{\bfa = (a_1, a_2, \ldots) \in \red \pi} (x+a_1)(x+a_2) \cdots = k!\rpp(\lambda, x),
\]
where $\rpp(\lambda,x)$ denotes the number of reverse plane partitions of shape $\lambda$ and entries in the range $[0, x]$.
\end{proposition}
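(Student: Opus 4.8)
The plan is to deduce Proposition~\ref{prop:fk} from Theorem~\ref{thm:macdonald} by an averaging/summation argument, rather than proving it from scratch. The key observation is that the left-hand side, after expanding the product $(x+a_1)(x+a_2)\cdots$, becomes a sum over subsets $S \subseteq \{1,2,\ldots,k\}$ of the crossing positions: each factor contributes either $x$ or $a_t$. Explicitly, I would write
\begin{equation}
\label{eqn:expand}
\sum_{\bfa \in \red(\pi)} \prod_t (x+a_t) = \sum_{\bfa \in \red(\pi)} \sum_{S \subseteq \{1,\ldots,k\}} x^{k-|S|} \prod_{t \in S} a_t.
\end{equation}
Interchanging the two sums groups the computation by the size $j = |S|$ of the chosen subset, so the coefficient of $x^{k-j}$ is $\sum_{\bfa}\sum_{|S|=j}\prod_{t\in S} a_t$, an elementary-symmetric-type sum in the heights of a reduced word.

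Next I would give a combinatorial interpretation of each coefficient. Selecting a subset $S$ of positions and weighting by $\prod_{t\in S} a_t$ is exactly a Macdonald-type weight applied to the \emph{subword} of $\bfa$ indexed by $S$, while the complementary positions are ignored. The natural move is to recognize that choosing $S$ and keeping only those crossings corresponds to reading off a reduced word for some \emph{intermediate} dominant shape $\mu \subseteq \lambda$ obtained by deleting cells of the Rothe diagram, and that the discarded crossings record the data of a reverse plane partition filling. Here I expect to invoke Theorem~\ref{thm:macdonald} repeatedly: summing $\prod_{t\in S} a_t$ over all reduced words of the sub-permutation associated to $\mu$ yields $|\mu|!$, which is the engine that collapses the double sum into factorials.

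The hard part will be making the bijective correspondence between (reduced word, chosen subset $S$) pairs and (smaller reduced word, reverse plane partition) pairs precise, so that the power of $x$ and the factorial $k!$ emerge correctly. Concretely, I want to show that
\begin{equation}
\label{eqn:rpp-coeff}
\sum_{\bfa \in \red(\pi)} \sum_{|S|=k-j} \prod_{t\in S} a_t = k!\cdot \#\{\text{RPP of shape }\lambda,\ \text{entries in }[0,x],\ \text{using ``value'' } j\},
\end{equation}
and that summing over $j$ reconstructs $k!\,\rpp(\lambda,x)$. The cleanest route is probably to insert one crossing at a time using the $\IB$ map of Definition~\ref{def:bump_delete and insert_bump}: since each application of $\IB$ adds a crossing at a height matching the Macdonald weight and the growth process of Section~\ref{sec:bijection} already certifies the $k!$ normalization, I can track how each factor $(x+a_t)$ splits as the word grows and show the accumulated $x$-exponent distribution is governed by the RPP statistic. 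In effect, the $x$-refinement becomes a $q$-free record of ``when'' along the growth each cell of $\lambda$ was filled, which is exactly the data of a reverse plane partition with bounded entries.

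The main obstacle, therefore, is verifying that the growth-process bijection respects this refined RPP statistic, i.e.\ that the intermediate shapes visited by $\IB$ are weakly increasing in the way a reverse plane partition demands, and that the bound $[0,x]$ on entries corresponds exactly to the range of the $x^{k-|S|}$ powers appearing in \eqref{eqn:expand}. Once that compatibility is established, Proposition~\ref{prop:fk} follows by summing \eqref{eqn:rpp-coeff} over $j$ and recognizing $\sum_j \#\{\text{RPP}\ldots\} = \rpp(\lambda,x)$.
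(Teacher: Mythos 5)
Your plan has a fundamental gap, and it is worth noting first that the paper does not prove Proposition~\ref{prop:fk} by any expansion or bijection at all: it derives it by citation, from Macdonald's formula in its \emph{general} form (valid for arbitrary permutations, with right-hand side $k!\,\mathfrak{S}_{\pi}(1)$, the number of terms of the Schubert polynomial), applied to the permutation $1^x \times \pi$. Since every reduced word of $1^x \times \pi$ is a shifted word $\bfabarbar$ with $\mu(\bfabarbar) = \mu_x(\bfa)$, the left-hand side of Proposition~\ref{prop:fk} equals $k!\,\mathfrak{S}_{1^x\times\pi}(1)$, and one then evaluates $\mathfrak{S}_{1^x\times\pi}(1) = \rpp(\lambda,x)$ using Wachs's theorem~\cite{wachs} expressing the Schubert polynomial of a vexillary permutation as a flagged Schur function. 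This matters for your approach: $1^x\times\pi$ is \emph{not} dominant for $x \geq 1$ (its Rothe diagram is $\lambda$ translated away from the corner), so the dominant-case Theorem~\ref{thm:macdonald} is genuinely insufficient input, and no formal regrouping of the expansion $\prod_t(x+a_t) = \sum_S x^{k-|S|}\prod_{t\in S}a_t$ can supply the missing Schubert-calculus ingredient.

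Concretely, three of your steps fail. First, the subword of a reduced word $\bfa$ indexed by an arbitrary subset $S$ of positions is in general not reduced, and even when it is, the permutation it represents need not be dominant; so there is no ``intermediate dominant shape $\mu \subseteq \lambda$'' to which Theorem~\ref{thm:macdonald} can be applied repeatedly, and the discarded crossings carry no evident reverse-plane-partition structure. Second, your claimed coefficient identity, asserting that $\sum_{\bfa}\sum_{|S|=k-j}\prod_{t\in S}a_t$ equals $k!$ times a count of RPPs ``using value $j$,'' has an undefined right-hand side and is asserted rather than argued; the coefficients of the polynomial $\rpp(\lambda,x)$ in the monomial basis are not counts of RPPs refined by any single obvious statistic. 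Third, the growth-process route you call cleanest is precisely what Section~\ref{sec:fomin-kirillov} of the paper carries out and shows to break down: one can build a graph $\Lambda^x_T$ in which the number of maximal paths to $\bfa$ is $\mu_x(\bfa)$, but by Proposition~\ref{prop:lambda x} the extra $x$ parallel edges occur only for top-position insertions, so $\Lambda^x_T$ does \emph{not} have constant outdegree at each rank, the normalization argument of Lemma~\ref{lemma:uniform outdegree} and Corollary~\ref{corollary:simple random walk} fails, and the paper states explicitly that its results ``are not adequate to prove any of the identities in~\cite{fomin-kirillov}.'' The compatibility you propose to ``verify'' between the growth process and the RPP statistic is exactly the open problem the paper defers to future work, not a routine check.
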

Proposition~\ref{prop:fk} is, in fact, a \emph{consequence} of Macdonald's formula, and a result of Wachs~\cite{wachs} which interprets the Schubert polynomial $\mathfrak{S}_{\pi}$ of a vexillary permutation as a flagged Schur function.  Schubert calculus, unfortuately, yields no hints as to how to make this result bijective.  
Our methods do yield an unweighted bijective interpretation for the left-hand side of Proposition~\ref{prop:fk}: it is the number of maximal-length paths in a certain ranked, multiple-edged directed graph (see Section~\ref{sec:bijection}).
  However, it is not as straightforward to interpret the right-hand side in a similar way; nor does simple random walk in this lattice generate such chains uniformly.  

Bijective proofs of the general Macdonald's formula, and of the non-$q$-analogue results of Fomin-Kirillov~\cite{fomin-kirillov} will appear in a forthcoming paper.
Work on the $q$-analogues of these results is ongoing.

\begin{figure}
\caption{The first few levels of the graph $\Lambda_{T}$, showing all possible trajectories of the markov growth process.  The diagrams on the left are dominant permutations $\pi$ and their diagrams $\lambda$.  The numbers beside each wiring diagram $\bfa$ are $\mu(\bfa)$; they coincide with $\#\{\text{paths to }\bfa\}$.   Random walk produces $\bfa$ with probability proportional to $\mu(\bfa)$.  Image Credit: Kristin Potter, \url{https://casit.uoregon.edu/}
\label{fig:macdonald shuffle graph}}
\begin{center}
\includegraphics[width=\figwidth]{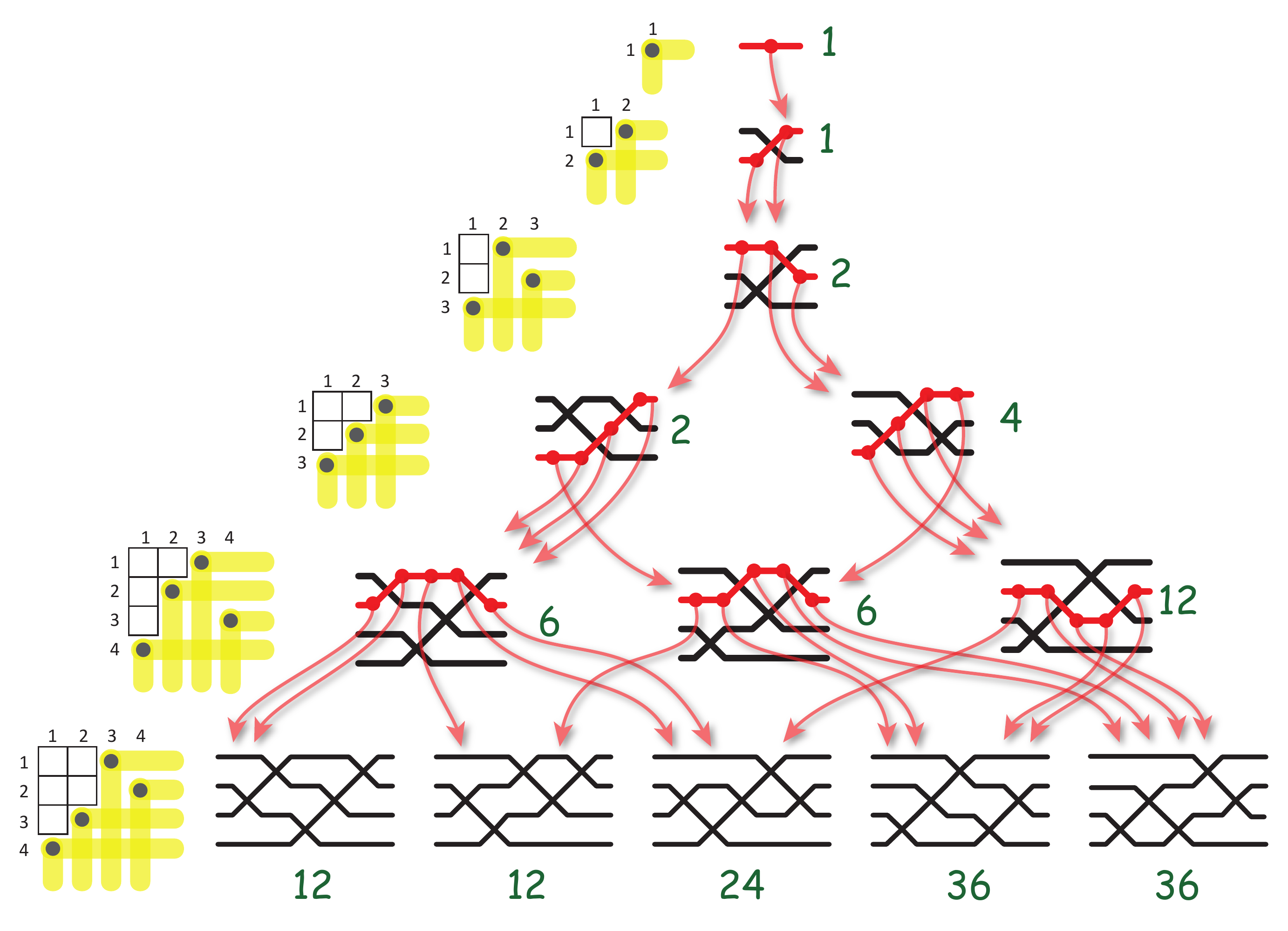}
\end{center}
\end{figure}

The author would like to thank 
Sara Billey, 
Vadim Gorin,
Zachary Hamaker, 
Alexander Holroyd, 
Greta Panova, 
Miklos Racz,
Jeff Remmel,
Alex Rozinov, 
Dan Romik,
Andrew Wilson
and
David Wilson, 
for many helpful conversations, as well as extending special thanks to Alexander Holroyd for the help in devising an efficient implementation of the Little bump, which was used to create the picture in Figure~\ref{fig:big wiring diagram}.  Computational work was done in part using SAGE~\cite{sage}.

\section{Definitions}
\label{sec:definitions}

We begin by recalling the definition for Little's bumping algorithm~\cite{little1} and establishing some notation for it.

\begin{definition}
\label{definition:push}
Let $\bfa = (a_1,a_2,\ldots, a_k)$ be a reduced word.  We define the \emph{Push up}, \emph{Push down}, and \emph{Deletion} of $\bfa$ at $t$, respectively, to be
\begin{align*}
\Push^{\uparrow}_{t}\bfa &= (a_1, \ldots, a_{t-1}, a_t - 1, a_{t+1}, \ldots, a_k),\\
\Push^{\downarrow}_{t}\bfa &= (a_1, \ldots, a_{t-1}, a_t + 1, a_{t+1}, \ldots, a_k),\\
\Delete_{t}\bfa &= (a_1, \ldots, a_{t-1}, a_{t+1}, \ldots, a_k).
\end{align*}
\end{definition}

\begin{definition}
Let $\bfa$ be a word.  If $\Delete_t\bfa$ is reduced, we say that $\bfa$ is \emph{nearly reduced at $t$.}
\end{definition}

The term ``nearly reduced'' was coined by Lam~\cite{lam}, who uses ``$t$-marked nearly reduced''.  
Words that are nearly reduced at $t$ may or may not also be reduced; however, every word $\bfa$ is nearly reduced at some index $t$.  For instance, Little~\cite{little1} observes that if $\bfa$ is a reduced word for the permutation $\pi$, then there is a canonical location $t$ where $\bfa$ is nearly reduced; $t$ is determined by the Lascoux-Schutzenberger tree of $\pi$~\cite{lascoux-schutzenberger, little1, saga}.  More obviously, any reduced word $\bfa$ of length $k$ is nearly reduced at 1 and also at $k$.

In order to define the Little bump, we need the following lemma, which to our knowledge first appeared in~\cite[Lemma 4]{little1}, and was later generalized to arbitrary Coxeter systems~\cite[Lemma 21]{lam}.  
\begin{lemma}
\label{lem:nearly_reduced}
If $\bfa$ is not reduced, but is nearly reduced at $t$, then $\bfa$ is nearly reduced at exactly one other position $t' \neq t$.
\end{lemma}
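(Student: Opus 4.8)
The plan is to analyze the failure of reducedness of $\bfa$ via the permutation $\sigma = s_{a_1} s_{a_2} \cdots s_{a_k}$ that $\bfa$ spells out, together with its partial products $\pi_t = \prod_{i=1}^t s_{a_i}$. Since $\bfa$ is nearly reduced at $t$, the word $\Delete_t \bfa$ is reduced and spells a permutation $\tau$ of length $k-1$. The key observation is that $\bfa$ itself has length $k$ but is not reduced, so $\sigma = \tau$ (multiplying $\tau$ by one more transposition $s_{a_t}$ that gets reinserted cannot increase the length, since $\bfa$ being non-reduced forces $\ell(\sigma) < k$; in fact $\ell(\sigma) = k-1$). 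First I would make this precise: deleting $a_t$ drops the length by exactly one to reach a reduced word, whereas inserting it back keeps the length at $k-1$, which means the crossing at position $t$ in the wiring diagram of $\bfa$ is a \emph{repeated} crossing of the \emph{same} pair of wires as some other crossing.

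The heart of the argument is a wiring-diagram (or equivalently partial-permutation) analysis. Because $\bfa$ is not reduced but $\Delete_t \bfa$ is, there is exactly one pair of wires $\{w_1, w_2\}$ that crosses twice in the diagram of $\bfa$, and one of these two crossings sits at position $t$. The second task is to show that the \emph{only} other position $t'$ at which $\bfa$ is nearly reduced is the position of the \emph{other} crossing of this same pair $\{w_1, w_2\}$. For the inclusion ``$t'$ works,'' I would verify that deleting the second crossing of the doubly-crossing pair also removes the unique repetition, yielding a reduced word; symmetry between the two crossings of $\{w_1, w_2\}$ makes this essentially automatic. For uniqueness, I would argue that deleting a crossing at any position $s$ that is \emph{not} one of these two leaves the pair $\{w_1, w_2\}$ still crossing twice, so $\Delete_s \bfa$ remains non-reduced; hence $\bfa$ is not nearly reduced at such $s$. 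Combining these, the positions of near-reducedness are exactly $t$ and $t'$, and they are distinct since $\{w_1, w_2\}$ genuinely crosses in two different positions.

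Concretely, the cleanest way to run the uniqueness step is through the length function. Deleting the crossing at position $s$ produces a word of length $k-1$ spelling $\pi_{s-1} s_{a_{s+1}} \cdots s_{a_k} = \sigma \, s_{a_s}$ (using $\sigma = \pi_k$ and that $s_{a_s}$ commutes past to the appropriate side), and this word is reduced precisely when the removed transposition was a genuine descent, i.e.\ when $\ell(\sigma s_{a_s}) = \ell(\sigma) + 1 = k$. So near-reducedness at $s$ is equivalent to the condition that reinserting the $s$th crossing into the reduced word $\Delete_s\bfa$ actually lengthens it; and this in turn is governed by whether the two wires meeting at $s$ are ``out of order'' in $\sigma$. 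I would phrase this in terms of which pair of wires cross at $s$ and whether that pair is inverted by $\sigma$: exactly the doubly-crossed pair $\{w_1, w_2\}$ fails to be inverted (the two crossings cancel), and every other crossing corresponds to a genuine inversion of $\sigma$, so removing it breaks reducedness.

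The main obstacle I anticipate is establishing the \emph{uniqueness and existence of a single doubly-crossing pair}: that a word which is nearly reduced at $t$ but not reduced has precisely one repeated pair of wires, crossing exactly twice. Once that structural fact is in hand, the two near-reduced positions $t$ and $t'$ drop out as the two crossings of that pair, and everything else is bookkeeping with the length function. I would therefore spend the bulk of the proof on this structural lemma, likely by tracking how $\ell(\pi_t)$ evolves: since $\Delete_t\bfa$ is reduced of length $k-1$ and $\bfa$ has length $k$, the sequence $\ell(\pi_0), \ell(\pi_1), \ldots, \ell(\pi_k)$ increases by one at every step except for a single ``down'' step, and I would show this forces exactly one pair of wires to cross twice and all other pairs to cross at most once.
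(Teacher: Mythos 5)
Your proposal has a genuine gap: the structural lemma on which you say you would spend the bulk of the proof is false, and the supporting length computations are also incorrect. Take $\bfa = (1,2,2,1)$ in $S_3$, which spells $\sigma = s_1s_2s_2s_1 = e$ and is nearly reduced at $t=2$, since $\Delete_2\bfa = (1,2,1)$ is reduced. In its wiring diagram, wires $1$ and $2$ cross at positions $1$ and $4$, while wires $1$ and $3$ cross at positions $2$ and $3$: \emph{two} distinct pairs of wires each cross twice, so there is no unique doubly-crossing pair. The length sequence $\ell(\pi_0),\ldots,\ell(\pi_4) = 0,1,2,1,0$ has two down-steps, not one. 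Your claims ``$\sigma = \tau$'' and ``$\ell(\sigma)=k-1$'' are impossible for parity reasons alone: each multiplication by a simple reflection changes length by exactly $\pm 1$, so $\ell(\sigma)\equiv k \pmod 2$, and a non-reduced word of length $k$ has $\ell(\sigma)\le k-2$; in the example even the corrected guess $\ell(\sigma)=k-2$ fails, since $\ell(\sigma)=0=k-4$. Your proposed criterion for near-reducedness fails as well: the pair $\{1,2\}$ is not inverted by $\sigma=e$, yet $\bfa$ is not nearly reduced at positions $1$ or $4$ (e.g.\ $\Delete_1\bfa = (2,2,1)$ is not reduced); the lemma's two positions are $2$ and $3$, the crossings of $\{1,3\}$, so the conclusion of the lemma holds while every structural ingredient of your argument fails.

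The algebra behind your criterion is also off. Deleting position $s$ does not produce a word spelling $\sigma s_{a_s}$ --- the letter $s_{a_s}$ does not commute past the rest of the word. Rather, $\Delete_s\bfa$ spells $\sigma\tau_{x,y}$ (up to the left/right convention), where $\tau_{x,y}$ is the generally \emph{non-simple} transposition of the two wires crossing at $s$; near-reducedness at $s$ is the statement $\ell(\sigma\tau_{x,y}) = k-1$, which is strictly stronger than ``$\{x,y\}$ is a non-inversion of $\sigma$'' whenever $\ell(\sigma)<k-2$, as the example shows. Note also that deleting a crossing reroutes all wires downstream of it, so arguments of the form ``after deleting some other crossing the pair still crosses twice'' are not sound as stated. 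For the record, the paper does not prove this lemma itself: it cites \cite[Lemma 4]{little1}, generalized in \cite[Lemma 21]{lam}, and those proofs run through the (strong) exchange condition rather than through doubly-crossing pairs. Writing $\tau$ for the element of the reduced word $\Delete_t\bfa$, one has $\sigma = \tau\tau_{x,y}$ with $\ell(\sigma)\le k-2 < k-1 = \ell(\tau)$, so the exchange condition applied to the reduced expression $\Delete_t\bfa$ yields a unique letter whose deletion spells $\sigma$, and that letter locates the unique second position $t'$; one then verifies near-reducedness holds exactly at $t$ and $t'$. If you want a genuinely diagrammatic proof, you must work with the full length condition $\ell(\sigma\tau_{x,y})=k-1$ at each crossing, not with counts of repeated pairs.
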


\begin{definition}
Using the notation of Lemma~\ref{lem:nearly_reduced}, we say that $t'$ \emph{forms a removable defect with} $t$ in $\bfa$, and write $\defect_t(\bfa) = t'$.
\end{definition}

The following is essentially Little's generalized bumping algorithm, defined in~\cite{little1}.  

\begin{algorithm}[Little Bumping Algorithm]
\label{algorithm:little bump}
Input: a word $\bfa'$ which is nearly reduced at $t_0$, and a direction $d \in \{\uparrow, \downarrow\}$.  Define $\Bump^{d}_{t_0}(\bfa')$ as follows:
\begin{enumerate}
\item Initialize $\bfa\leftarrow \bfa', t \leftarrow t_0$.
\item $\bfa \leftarrow \Push^{d}_{t}\bfa$.
\item $t \leftarrow \defect_t(\bfa)$.
\item If $\bfa$ is reduced, return $\bfa$.  Otherwise go to step 2.
\end{enumerate}
\end{algorithm}

The only significant difference between Little's ``more general bijection'' $\theta_r$ and our map $\Bump^{\uparrow}_t$, other than the indexing, is that $\theta_r$ shifts the entire word down (by applying $\prod_t\Push^{\downarrow}_t$, in our terminology) if a crossing is pushed onto the zero line, whereas our $\Bump^{\uparrow}_t$ map does not.  Rather, we simply introduce a new wire marked 0.  That is, we admit reduced words for permutations of the the points $(0,1, \ldots, n)$, though as we shall see this is for convenience only.  For the moment, observe that permutations which do not fix the point 0 necessarily have a crossing at height 0, and thus the Macdonald weight of any reduced word for such a permutation is 0.

The following definitions are taken from~\cite[Equations (6)-(9)]{little1}, modified in order to take the above difference into account.  
\begin{definition}
\label{definition:domain and range index sets for little bump}
Let $\tau_{i,j}$ denote the transposition $(i,j)$, and let $\ell(\pi)$ denote the number of inversions of the permutation $\pi$.  Define
\begin{align*}
I(\pi,r) &= \{0 \leq i < r\;|\; \ell(\pi \tau_{i,r}) = \ell(\pi) + 1\},\\
S(\pi,r) &= \{r < s  \;|\; \ell(\pi \tau_{r,s}) = \ell(\pi) + 1\},\\
\Phi(\pi,r) &= \{\pi \tau_{i,r}\;|\;i \in I(\pi,r)\}, \\
\Psi(\pi,r) &= \{\pi \tau_{r,s}\;|\;s \in S(\pi,r)\}. 
\end{align*}
\end{definition}

We will need several standard properties of the bumping algorithm.

\begin{proposition}(Properties of the Little Bump)
\label{proposition:little bump properties}
\begin{enumerate}
\item No crossing in $\bfa$ is moved more than once in Algorithm~\ref{algorithm:little bump}.
\item Algorithm~\ref{algorithm:little bump} terminates, and thus $\Bump^d_t \bfa$ is well-defined.
\item If $\bfa$ has a descent at $j$, then so does $\Bump^d_{t}(\bfa)$ for all positions $t$ where $a$ is nearly reduced.
\item If $s \in S(\pi,r)$ and $t(r,s)$ denotes the location of the crossing of wires $r$ and $s$, then 
$\Bump^{\downarrow}_{t(r,s)}$ is a bijection between the sets
\[\bigcup_{\rho \in \Psi(\pi \tau_{r,s},r)}\red(\rho) \longrightarrow \bigcup_{\rho \in \Phi(\pi \tau_{r,s}, r)} \red(\rho),\] 
\item Let $r$ be an arbitrary wire, and let $s \in S(\pi \tau_{r,s}, r)$.  Let $t_0$ denote the crossing of wires $r$ and $s$.  Calculate $\BD_{t_0}^d$ with Algorithm~\ref{algorithm:little bump}.  Then after every instance of step 3, $\Delete_t \bfa$ is a reduced word for $\pi \tau_{r,s}$.
\end{enumerate}
\end{proposition}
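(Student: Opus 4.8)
The organizing object for the whole proposition is the \emph{trajectory} of an execution of Algorithm~\ref{algorithm:little bump}: the sequence of marked positions $t_0, t_1, t_2, \ldots$ visited by the repeated assignment in step~3, together with the crossings pushed in step~2. My plan is to isolate one structural fact about this trajectory and to read off the five properties from it. The fact is that consecutive pushed crossings share exactly one wire (two distinct crossings of a reduced word share at most one, and the defect created by a push must involve a wire of the pushed crossing, so exactly one), and that a suitable monotone statistic strictly changes at each push: since every push in a single run uses the same direction $d$, pushing up (respectively down) forces the trajectory to travel strictly in that direction, so it can never return to a crossing it has already moved. I would prove this by a local analysis of a single $\Push^d_t$: altering one crossing's height by one unit is exactly the event that can spoil reducedness, and by Lemma~\ref{lem:nearly_reduced} the damaged word is nearly reduced at a unique new position $t' = \defect_t(\bfa)$; inspecting the two wires now in conflict identifies the next crossing and pins down the direction of travel. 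Property~(1) is then immediate, and Property~(2) follows because there are only finitely many crossings, each touched at most once, so step~4 can loop only finitely often before returning a reduced word, which is moreover determined by the input.

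For Property~(5) I would induct on the number of completed passes through step~3, the claim being that the deletion $\Delete_t\bfa$ of the currently marked crossing is always a reduced word for $\pi\tau_{r,s}$. The base case holds because $\Delete_{t_0}$ removes the crossing of wires $r$ and $s$, undoing precisely the transposition $\tau_{r,s}$ and leaving a reduced word by hypothesis. For the inductive step one checks that a single $\Push^d_t$ followed by the reassignment $t \leftarrow \defect_t(\bfa)$ alters $\bfa$ only in a manner invisible to the deletion: the wires carrying the old and new marked crossings are arranged so that $\Delete_{t'}(\Push^d_t\bfa)$ represents the same permutation $\pi\tau_{r,s}$ as $\Delete_t\bfa$. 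This is exactly the bookkeeping encoded by the index sets $I(\pi,r), S(\pi,r)$ and the permutation sets $\Phi, \Psi$ of Definition~\ref{definition:domain and range index sets for little bump}.

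Property~(4) then follows by combining (5) with invertibility. The deletion invariant certifies that $\Bump^{\downarrow}_{t(r,s)}$ carries $\red(\rho)$, for $\rho \in \Psi(\pi\tau_{r,s},r)$, into the union over $\rho' \in \Phi(\pi\tau_{r,s},r)$ of $\red(\rho')$, and symmetrically that $\Bump^{\uparrow}$ maps the codomain back into the domain; this is the well-definedness half. To upgrade this to a bijection I would show that $\Bump^{\uparrow}$ and $\Bump^{\downarrow}$ are mutually inverse, the point being that running the opposite-direction bump on an output retraces the identical trajectory of crossings in reverse order --- a fact forced by the monotonicity of the first paragraph together with the determinism of step~3. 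Finally, Property~(3) is a direct local argument: because every push in a run shares the direction $d$, a descent at $j$ can only be disturbed if the trajectory meets column $j$ or $j+1$, and a short case analysis shows that whenever pushing one of the two adjacent crossings alone would make the returned word non-reduced, the other is pushed as well and in the same direction, so the strict inequality $a_j > a_{j+1}$ survives; if the trajectory misses both columns the descent is untouched.

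The main obstacle is the mutual-inverse claim underlying Property~(4). Once the monotone trajectory structure is in hand, (1) and (2) are soft, and (3) and (5) are invariants amenable to induction and case analysis; but verifying that $\Bump^{\uparrow}$ exactly undoes $\Bump^{\downarrow}$ requires showing that the reverse run selects the same sequence of defects, step for step. The delicate point sits at the ends of the trajectory --- in particular at the boundary convention at height $0$, where our introduction of a wire marked $0$ is what distinguishes $\Bump^{\uparrow}$ from Little's $\theta_r$ and where the retracing argument must be checked by hand.
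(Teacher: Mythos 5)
The paper's own proof of this proposition is essentially a citation: properties (1)--(4) are quoted as the lemmas of~\cite{little1}, property (3) is noted to be a consequence of property (1), and property (5) is reproduced in two lines from~\cite{little2}. You are therefore attempting considerably more than the paper does, but your self-contained argument has concrete gaps at exactly the points where Little's lemmas do real work. First, your organizing structural fact is misstated: when $\Push^{d}_{t}$ spoils reducedness, the defect crossing at $t' = \defect_t(\bfa)$ crosses the \emph{same pair} of wires as the crossing at $t$ in the pushed word --- that is precisely why deleting either of $t, t'$ restores reducedness in Lemma~\ref{lem:nearly_reduced} --- so consecutive pushed crossings share both wires of the current crossing, not ``exactly one''; your appeal to ``two distinct crossings of a reduced word share at most one wire'' is illegitimate here because the intermediate words are not reduced, which is the whole point. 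Second, and more seriously, the monotonicity claim driving (1) and (2) --- that the trajectory ``travels strictly in that direction'' and so never revisits a crossing --- is asserted rather than proved, and it is the entire content of Little's nontrivial lemma. It is not even clear in which coordinate the travel is monotone: the marked position can jump to either side of $t$, since the defect may precede or follow the pushed crossing, and a unit push alters all wire trajectories downstream of $t$, so the height of the defect crossing is not obviously comparable to the height just vacated; the ``local analysis of a single $\Push^{d}_{t}$'' you describe identifies the next crossing but produces no monotone statistic. Note also that (2) genuinely needs (1): for $\Bump^{\downarrow}$ the quantity $\sum_t a_t$ \emph{increases} at every push, so no naive boundedness argument terminates the loop. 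Finally, you yourself flag the mutual-inverse claim underlying (4) as the main obstacle and leave it unresolved; that retracing argument (including starting the reverse bump at the crossing where the forward bump terminated, and the behavior at height $0$) is again one of Little's lemmas and cannot be taken for granted.

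Where your proposal is sound, it matches the sources. Your induction for (5) is essentially the proof the paper reproduces from~\cite{little2}; I would urge you to state the inductive step in its sharp form --- $\Delete_t w$ and $\Delete_t \Push^{d}_t w$ are words for the same permutation, and for a defect pair $(t,t')$ the words $\Delete_t w$ and $\Delete_{t'} w$ are reduced words for the same permutation --- rather than the vaguer ``alters $\bfa$ in a manner invisible to the deletion.'' For (3) your case analysis is both shakier and unnecessary: granting (1), each of the letters $a_j, a_{j+1}$ changes by at most one, all pushes share the direction $d$, so $a_j > a_{j+1}$ yields $a'_j \geq a'_{j+1}$ in the output; since the output is reduced, adjacent letters cannot be equal, whence $a'_j > a'_{j+1}$. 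This one-line deduction is what the paper means when it says property (3) is a consequence of property (1), and it requires no claim that the two adjacent crossings are pushed in tandem.
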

\begin{proof}
Properties 1 through 4 are the lemmas of~\cite{little1}; they are stated and proven there.   Property 3 is, in fact, a consequence of property 1. 

Property 5 is implicit in~\cite{little1}, and is stated and proven in~\cite{little2}.  The proof of property 5 given there is pretty and short, so we reproduce it here: $\Delete_t w$ and $\Delete_t \Push_t w$ are always words for the same permutation; also, if $(t,t')$ form a removable defect of $w$, then $\Delete_t w$ and $\Delete_{t'} w$ are reduced words for the same permutation.  Thus the result follows by induction on the number of steps in the bump.
\end{proof}

\begin{proposition}
\label{proposition:litle bump on dominant permutations}
Suppose $\bfa$ is a reduced word for the dominant permutation $\pi \in S_n$.  Suppose $\bfa$ is nearly reduced at $t$.  Then $\Bump^{\uparrow}_t \bfa$ has a crossing on the zeroth row.
\end{proposition}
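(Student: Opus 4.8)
The plan is to reformulate the conclusion and then track the motion of the crossings that the bump actually pushes. As the discussion after Algorithm~\ref{algorithm:little bump} observes, a permutation of $\{0,1,\dots,n\}$ has a crossing at height $0$ precisely when it does not fix the point $0$, so it suffices to show that $\Bump^{\uparrow}_t\bfa$ is a reduced word for a permutation moving $0$. First I would unwind Algorithm~\ref{algorithm:little bump} into the sequence of crossings it moves: set $\bfa^{(0)}=\bfa$ and $p_0=t$, and inductively $\bfa^{(i+1)}=\Push^{\uparrow}_{p_i}\bfa^{(i)}$ with $p_{i+1}=\defect_{p_i}(\bfa^{(i+1)})$, obtaining a finite list of positions $p_0,p_1,\dots,p_m$ at which crossings are pushed upward. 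That this list is finite and corresponds to distinct crossings is exactly parts (1) and (2) of Proposition~\ref{proposition:little bump properties}.

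The first key step is a monotonicity statement: the height at which we push strictly decreases from one step to the next, so the bump marches steadily toward row $0$. Concretely, when the crossing at $p_i$ is pushed up one row it comes to involve the wire lying immediately above it; by Lemma~\ref{lem:nearly_reduced} the resulting non-reduced word is nearly reduced at exactly one further position $p_{i+1}$, and a short wiring-diagram argument shows that the crossing there lies strictly above (at strictly smaller height than) the one just moved. Hence the trajectory never revisits a height, and the algorithm can only halt either by moving a crossing all the way to height $0$, or earlier because some push fails to create a repeated crossing.

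The heart of the matter, and the step I expect to be the main obstacle, is ruling out the early halt using dominance. I would argue contrapositively: suppose the bump terminates by moving a crossing to some row $h\ge 1$ without creating a repeated crossing. Translating ``no repeated crossing is created'' back through the inversion $\leftrightarrow$ Rothe-cell $\leftrightarrow$ crossing dictionary of Section~\ref{sec:intro}, this says the bumped wire, having risen to row $h$, meets no crossing above it that would force it onward; I would show such a configuration exhibits three wires whose relative order realizes the pattern $132$. Since $\pi$ is dominant it is $132$-avoiding, a contradiction, so the process can only stop at height $0$. The delicate point is promoting the purely local stopping condition at height $h$ into an honest $132$ pattern of $\pi$ itself rather than of some intermediate word; here I would invoke the deletion invariant recorded in part (5) of Proposition~\ref{proposition:little bump properties} and its proof, namely that throughout the bump $\Delete_{p_i}\bfa^{(i)}$ is a reduced word for one fixed permutation $\rho$ with $\ell(\rho)=\ell(\pi)-1$, so the global permutation data needed to locate the forbidden pattern is available at every stage.

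Once a crossing is shown to reach height $0$, this push is necessarily the terminal step, since it introduces the new wire $0$ and yields a reduced word for a permutation moving $0$, which is exactly the assertion. As a consistency check I would confirm the claim on the running example $\bfa=(3,1,2,1)$ for $\pi=4213$, where the trajectory pushes the height-$3$ crossing to height $2$ and then its defect from height $1$ down to height $0$, giving $\Bump^{\uparrow}_1\bfa=(2,1,2,0)$ with a crossing on the zeroth row.
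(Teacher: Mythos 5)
Your proposal is an outline rather than a proof: the step you yourself call ``the heart of the matter'' --- ruling out termination of the bump at a height $h \geq 1$ by exhibiting a $132$ pattern in $\pi$ --- is exactly the entire content of the paper's proof, and you never actually carry it out. The paper needs no trajectory analysis at all: by part (4) of Proposition~\ref{proposition:little bump properties}, if wires $r$ and $s$ cross at position $t$, then $\Bump^{\uparrow}_t\bfa$ is a reduced word for $\pi\tau_{r,s}\tau_{i,r}$ for some $i \in I(\pi\tau_{r,s},r)$, and it suffices to show $i=0$. If $i \geq 1$, the defining length conditions for $I(\pi\tau_{r,s},r)$ and $S(\pi\tau_{r,s},r)$ --- namely $\ell(\pi\tau_{r,s}\tau_{i,r}) = \ell(\pi\tau_{r,s})+1$ and $\ell(\pi) = \ell(\pi\tau_{r,s})+1$ --- force $1 \leq i < r < s$ and $\pi(i) < \pi(s) < \pi(r)$, an occurrence of $132$, contradicting dominance. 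Your plan gestures at this same endgame (``I would show such a configuration exhibits three wires\dots'') and proposes to globalize a local stopping condition via part (5); but part (5) only tells you that the deleted words all represent the fixed permutation $\pi\tau_{r,s}$, and to extract the pattern you would still need the two length comparisons above --- which is, in effect, rederiving part (4). So the one genuinely hard step is deferred, not done, and the proposition remains unproved.

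Moreover, the scaffolding you do supply is shaky. Your monotonicity claim, as stated --- that the defect ``lies strictly above (at strictly smaller height than) the one just moved'' --- is false: take $\bfa = (2,1,3,2)$, a reduced word for the dominant permutation $3412$, which is nearly reduced at $t=1$. Pushing there gives $(1,1,3,2)$, whose defect is at position $2$, height $1$ --- the \emph{same} height as the just-moved crossing, not strictly above it (the bump then continues to $(1,0,3,2)$, consistent with the proposition). So the ``short wiring-diagram argument'' you allude to cannot prove what you assert; at best a weaker statement (pushed heights weakly interlace in some sense) might survive, and that would require a real proof, since Lemma~\ref{lem:nearly_reduced} says nothing about where the defect sits. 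More importantly, monotonicity does no logical work even if repaired: the bump halts when the word becomes reduced, which is not governed by the height reached, so ``marching toward row $0$'' neither rules out early termination nor forces arrival there --- everything still rests on the missing $132$ step, while termination itself is already parts (1)--(2) of Proposition~\ref{proposition:little bump properties}. The parts of your proposal that are correct --- the reduction of the conclusion to showing the resulting permutation moves the point $0$, and the worked check on $(3,1,2,1)$ --- do agree with the paper.
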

\begin{proof}
Suppose wires $r$ and $s$ cross at position $t$.  Then the word $\Bump^{\uparrow}_t \bfa$ is a reduced word for the permutation $\pi \tau_{r,s} \tau_{i,r}$ for some $i \in I$ by Proposition~\ref{proposition:little bump properties}, part (4). We will show that $i = 0$.  This is enough, as it implies that $\pi \tau_{i,r} \tau_{i,s}$ does \emph{not} fix the point 0, so $\bfa$ must have a crossing on the zeroth row.

Suppose for a contradiction that $i>0$.  Then the defining conditions for $I(\pi \tau_{r,s},r)$ and $S(\pi \tau_{r,s},r)$  force $1 \leq i < r < s \leq n$.  Also, the same defining conditions assert that $\ell (\pi \tau_{r,s} \tau_{i,r}) = \ell(\pi \tau_{r,s}+1)$ and $\ell(\pi) = \ell(\pi \tau_{r,s}) + 1$.  This implies that $\pi(i) < \pi(s) < \pi(r)$ - in other words, $\pi$ contains the pattern 132, so it is not dominant - a contradiction.  
\end{proof}

\section{The weight-preserving Little bump}
\label{sec:weight preserving bump}

The goal of this section is to explain how to alter the Little bump $\Bump^{\uparrow}_t$ so as to preserve $\mu(\bfa)$.  Our first task is to make a single push $\Push^{\uparrow}_t$ preserve $\mu(\bfa)$.  Of course, as stated, this goal is unattainable: pushing a crossing at height $h$ necessarily reduces its contribution to $\mu(\bfa)$ from $h$ to $h-1$.  As we shall see, the solution is to \emph{add}, formally, another word to $\Push^{\uparrow}_t \bfa$ which accounts for the missing weight.

Let $W$ be the set of all words.  We will work in the vector space $\mathbb{C}W$ of finite formal sums of words in $W$.  Extend $\mu$ linearly to this vector space, and introduce the inner product $\langle \cdot,\cdot\rangle$ which makes $W$ an orthonormal basis of $\mathbb{C}W$.

\begin{definition} Let $\bfa$ be a word of length $k$. If $1 \leq t \leq k$, define
the  \emph{push-delete} operator $\PD$ as follows:
\[\PD_t\bfa = \Push^{\uparrow}_t\bfa + \Delete_{t}\bfa \in \mathbb{C}W.\]
\end{definition}

\begin{proposition}
\label{proposition:push delete preserves FK measure}
$\mu(\bfa) = \mu(\PD_t\bfa).$
\end{proposition}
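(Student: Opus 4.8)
The plan is to compute $\mu(\PD_t\bfa)$ directly from the definition by expanding the formal sum and using the multiplicative structure of the Macdonald weight. Write $\bfa = (a_1, \ldots, a_k)$ and set $h = a_t$ to be the height of the crossing at position $t$. By definition, $\PD_t\bfa = \Push^{\uparrow}_t\bfa + \Delete_t\bfa$, and since $\mu$ is extended linearly to $\mathbb{C}W$, we have $\mu(\PD_t\bfa) = \mu(\Push^{\uparrow}_t\bfa) + \mu(\Delete_t\bfa)$.

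The key observation is that all three words $\bfa$, $\Push^{\uparrow}_t\bfa$, and $\Delete_t\bfa$ agree in every entry except at position $t$, where $\bfa$ has the factor $h$, the pushed word has the factor $h-1$, and the deleted word has that factor simply removed. Accordingly, let $P = \prod_{i \neq t} a_i$ denote the product of all the shared entries. Then $\mu(\bfa) = P \cdot h$, while $\mu(\Push^{\uparrow}_t\bfa) = P \cdot (h-1)$ and $\mu(\Delete_t\bfa) = P$ (the deleted word has no factor at the old position $t$, so its weight is just the product of the remaining entries). Adding these gives
\[
\mu(\PD_t\bfa) = P(h-1) + P = Ph = \mu(\bfa),
\]
which is precisely the claim. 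The algebraic identity $(h-1) + 1 = h$ is exactly what makes the push-delete operator weight-preserving, and this is the conceptual content foreshadowed in the section introduction: the deleted word ``accounts for the missing weight'' lost when the crossing drops from height $h$ to height $h-1$.

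I do not expect any genuine obstacle here; the statement is essentially a one-line bookkeeping computation, and the only care required is to correctly identify that deletion removes the factor at position $t$ entirely (contributing weight $P$, corresponding to the missing $+1$) rather than altering some other entry. One minor point worth stating explicitly is that the definition requires $1 \leq t \leq k$, so $\Delete_t\bfa$ and $\Push^{\uparrow}_t\bfa$ are both well-defined words of lengths $k-1$ and $k$ respectively; no reducedness hypothesis is needed, since $\mu$ is defined on all words and the computation is purely formal. I would present the argument compactly, factoring out $P$ at the outset so the cancellation is transparent.
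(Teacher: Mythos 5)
Your proof is correct and follows essentially the same route as the paper's: the paper writes $\bfa = \bfa' a_t \bfa''$ and sets $X = \mu(\bfa'\bfa'')$ (your $P$), then observes $\mu(\bfa) = a_tX = (a_t-1)X + X = \mu(\Push^{\uparrow}_t\bfa) + \mu(\Delete_t\bfa)$, which is exactly your factor-out-and-add computation. Your explicit remarks on linearity of $\mu$ and the absence of any reducedness hypothesis are fine but not substantively different.
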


\begin{proof}  Suppose that $\bfa = \bfa'a_t\bfa''$, where $a_t$ denotes the height of the crossing in the $t$th position.  Write $\mu(\bfa'\bfa'') = X$.  Then 
\[\mu(\bfa) = a_tX = (a_t-1)X + X = 
\mu(\Push^{\uparrow}_t\bfa) + \mu(\Delete_{t}\bfa) =
\mu(\PD_t\bfa).\]
\end{proof}

Now follows a variant of the Little bump algorithm, in which we use $\PD$ instead of $\Push^{\uparrow}$.   Also, instead of returning the result of the Little bump, we return the formal sum of the intermediate stages.  We call this variant the \emph{Bump-Delete algorithm}.   

\begin{algorithm}[Bump-Delete Algorithm]
\label{algorithm:bump delete}
Input: a word $\bfa'$ which is nearly reduced at $t'$.  Define $\BD_{t'}(\bfa')$ as follows:
\begin{enumerate}
\item Initialize $\bfa\leftarrow \bfa', t \leftarrow t'$, $R=0 \in \mathbb{C}W$.
\item $R \leftarrow R +\Delete_t(\bfa)$.
\item $\bfa \leftarrow \Push^{\uparrow}_{t}\bfa$.
\item $t \leftarrow \defect_t(\bfa)$.
\item If $\bfa$ is reduced, return $R$.  Otherwise go to step 2.
\end{enumerate}
\end{algorithm}

\begin{proposition}
\label{proposition: bump delete map preserves fk measure}
Let $\bfa$ be a reduced word for the dominant permutation $\pi$.   Suppose that $\bfa$ is nearly reduced at $t$, where $t$ introduces an inversion $(r,s)$, and that $\pi \tau_{r,s}$ is also a dominant permutation.  Then all of the summands of $\BD_t\bfa$ are reduced words for $\pi \tau_{r,s}$.  Moreover,  $\mu(\BD_t\bfa) = \mu(\bfa)$.
\end{proposition}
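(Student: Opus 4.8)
The plan is to establish the two assertions in sequence, since the weight identity follows easily once we know every summand of $\BD_t\bfa$ is a reduced word for $\pi\tau_{r,s}$. First I would address the claim that all summands are reduced words for $\pi\tau_{r,s}$. The key observation is that $\BD_t\bfa$ accumulates, via step 2, exactly the words $\Delete_t(\bfa)$ at each pass through the loop of Algorithm~\ref{algorithm:bump delete}, using the same sequence of pushes and defect-updates as the underlying Little bump $\Bump^{\uparrow}_t$. This means the intermediate states of $\BD_t$ coincide with those of $\Bump^{\uparrow}_t$, so I can invoke Proposition~\ref{proposition:little bump properties}, part (5): after every instance of the defect step, $\Delete_t\bfa$ is a reduced word for the permutation obtained by deleting the marked crossing, which is $\pi\tau_{r,s}$. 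Thus each summand collected in $R$ is a reduced word for $\pi\tau_{r,s}$, as required.

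Next I would verify the weight identity $\mu(\BD_t\bfa) = \mu(\bfa)$. The natural approach is to track the total Macdonald weight through the loop and show it is invariant. At the start, the ``live'' word is $\bfa$ itself with $\mu(\bfa)$, and $R = 0$. Each pass replaces the current live word $\bfa$ by $\Push^{\uparrow}_t\bfa$ while simultaneously adding $\Delete_t(\bfa)$ to $R$; by Proposition~\ref{proposition:push delete preserves FK measure} we have $\mu(\bfa) = \mu(\Push^{\uparrow}_t\bfa) + \mu(\Delete_t\bfa)$, so the quantity $\mu(\text{live word}) + \mu(R)$ is preserved by each iteration. When the algorithm terminates, the live word is reduced and hence, being a reduced word for $\pi\tau_{r,s}\tau_{i,r}$ with a crossing on the zeroth row (by Proposition~\ref{proposition:litle bump on dominant permutations}), has Macdonald weight $0$. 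Therefore the accumulated $\mu(R) = \mu(\BD_t\bfa)$ equals the initial $\mu(\bfa)$.

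The main obstacle I anticipate is ensuring that the invariant-tracking argument is airtight regarding termination and the vanishing of the final live word. Specifically, I must confirm that the dominance of $\pi\tau_{r,s}$ is exactly the hypothesis needed to apply Proposition~\ref{proposition:litle bump on dominant permutations} to the terminating word, guaranteeing its weight is zero rather than merely dropping out of the sum. One subtlety is that Proposition~\ref{proposition:litle bump on dominant permutations} is phrased in terms of $\Bump^{\uparrow}_t$ producing a crossing on the zeroth row for reduced words of a dominant permutation; I need the live word at termination of $\BD_t$ to coincide with the output of $\Bump^{\uparrow}_t$ applied to $\bfa$, which it does since the two algorithms perform identical pushes and defect-steps and differ only in what they return. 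Once this identification is made, the argument closes cleanly by induction on the number of loop iterations, with termination supplied by Proposition~\ref{proposition:little bump properties}, part (2).
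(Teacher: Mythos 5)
Your proof is correct and is essentially the paper's argument: both prove the first claim from part (5) of Proposition~\ref{proposition:little bump properties}, and both prove the weight identity by observing that each loop iteration is a $\PD$ step (Proposition~\ref{proposition:push delete preserves FK measure}) and that the terminal live word $\Bump^{\uparrow}_t\bfa$ has a height-zero crossing (Proposition~\ref{proposition:litle bump on dominant permutations}), hence weight zero---the paper merely compresses your iteration-by-iteration invariant into the single identity $\mu(\BD_t\bfa + \Bump^{\uparrow}_t\bfa) = \mu(\bfa)$. One small correction to your closing remark: the hypothesis that Proposition~\ref{proposition:litle bump on dominant permutations} needs is the dominance of $\pi$ itself (the permutation of the word being bumped), not of $\pi\tau_{r,s}$, and your main argument does in fact invoke it that way.
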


\begin{figure}
\caption{Algorithm~\ref{algorithm:bump delete}.  Read right-to-left.  Pushed crossings are also deleted, yielding summands of $\BD_t(w)$.  Note that the final word has weight zero because of the red crossing.  Compare with Figure~\ref{fig:one step of the shuffle}.
\label{}}
\begin{center}
\includegraphics[width=\figwidth]{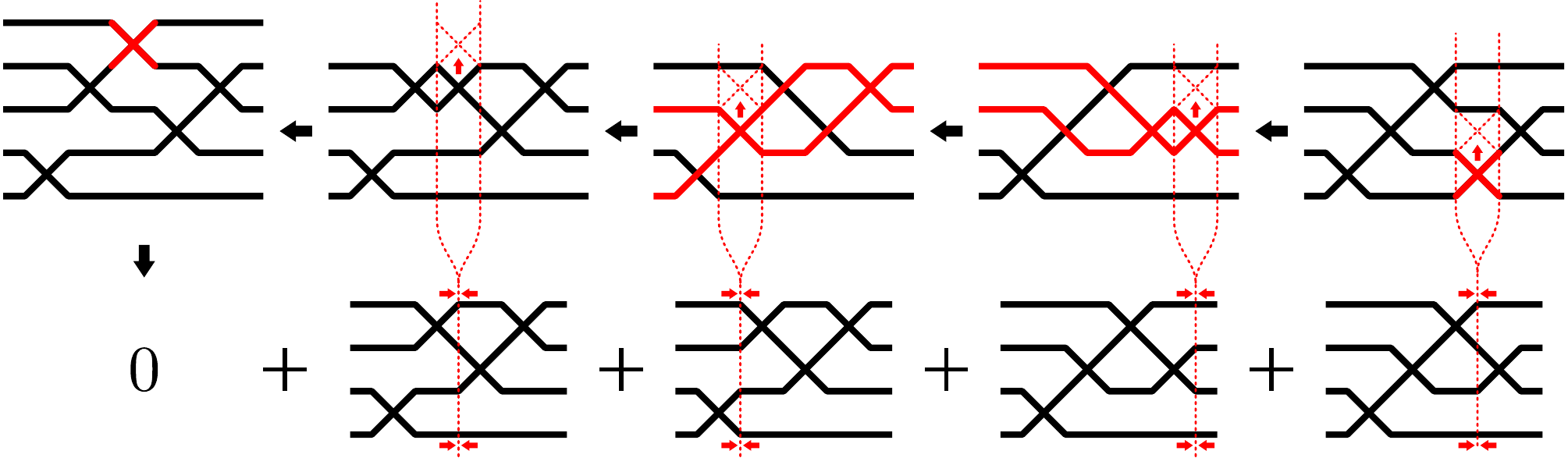}
\end{center}
\end{figure}

\begin{proof}
Proposition~\ref{proposition:little bump properties}, part 5 implies that all of the summands of $\BD_t\bfa$ are reduced words for $\pi \tau_{r,s}$.

Let $\tilde\bfa = \BD_t\bfa + \Bump^{\uparrow}_t\bfa$, and observe that $\tilde\bfa$ is the result of several iterations of $\PD$.  Thus $\mu(\tilde{\bfa}) = \mu(\bfa)$ by Proposition~\ref{proposition:push delete preserves FK measure}.   
By Proposition~\ref{proposition:litle bump on dominant permutations}, $\Bump^{\uparrow}_t \bfa$ has a crossing at height 0.  So $\mu(\Bump^{\uparrow}_t\bfa) = 0$,
and thus $\mu(\BD_t\bfa) = \mu(\bfa)$.
 \end{proof}

\section{The insert-bump map}
\label{sec:bijection}

In this section we will give a bijective proof of Theorem~\ref{thm:macdonald}.  The idea is to interpret both sides of Equation~\eqref{eqn:macdonald} as counts of the number of maximal-length paths in a certain graph.

Fix a dominant permutation $\pi$, with $k$ inversions.  Suppose the Rothe diagram of $\pi$ has shape $\lambda$, and let $T$ be an arbitrary standard Young tableau of shape $\lambda$.
$T$ gives rise to a maximal chain of Young diagrams $\emptyset = \lambda_0  <
\lambda_1 \cdots < \lambda_k = \lambda$ in Young's lattice in the usual way:
$\lambda_m$ is the shape of the subtableau of $T$ whose entries do not exceed
$m$.   

For each $m$, there is a unique dominant permutation $\pi_m$ with Young
diagram $\lambda_m$, with $\pi = \pi_k$. This can be seen inductively in $m$, by applying the fact that dominant permutations are
132 avoiding.  In fact, $\pi_m = \pi_{m-1} \tau_{i_m,j_m}$ for some pair of
wires $(i_m, j_m)$ with $i_m < j_m$, and furthermore $i_m$ is the number of the row of $T$ which contains $m$.  Let $\emptyset$ denote the
empty Young diagram, and let $\varepsilon$ denote the empty word.

\begin{definition}
\label{def:bump_delete and insert_bump}
Let $\BD_{\lambda_{m-1}}^{\lambda_m}:\mathbb{C}\red(\pi_m) \rightarrow \mathbb{C}\red(\pi_{m-1})$ be given by 
\[
\BD_{\lambda_{m-1}}^{\lambda_m}\bfa = \BD_{t_m}\bfa,
\]
where wires $r_m$ and $s_m$ cross at position $t_m$ in $\bfa$, and extending linearly.

Define the \emph{Insert-Bump} map $\IB_{\lambda_{m-1}}^{\lambda_{m}}$ to be the linear-algebraic adjoint of $\BD_{\lambda_{m-1}}^{\lambda_{m}}$ with respect to the inner product $\langle \cdot, \cdot \rangle$: that is, the unique map which satisfies $\langle \BD_{\lambda_{m-1}}^{\lambda_{m}} \bfa, \bfa' \rangle = \langle \bfa, \IB_{\lambda_{m-1}}^{\lambda_m} \bfa' \rangle$. 
\end{definition}

The terminology ``insert-bump'', and the notation $\IB$, were chosen because $\IB$ is the time-reversal of $\BD$.  Informally, $\BD$ starts from a reduced word, then bumps a crossing \emph{up}, and stops midway through the bump by deleting one of the pushed crossings. The adjoint map $\IB$, on the other hand, inserts a crossing and then bumps it \emph{down} until the resulting word is reduced.  Indeed, this is a quite general phenomenon.  Any algorithmic bijection can be inverted by reversing time (and maintaing suitable ``recording tableau'' information, typically); also, given a linear map defined in terms of an algorithmic bijection on basis vectors as we have done, one can often compute the adjoint in terms of the inverse of the bijection.  

In order to make the above remarks precise, we claim that $\IB_{\lambda_{m-1}}^{\lambda_m}$ can be performed by inserting a new crossing so as to cross wire $i_{m-1}$ with the wire above it.  This crossing can be inserted at any of the $m$ positions on this wire where one might add a crossing.  This at least makes sense: if the word $\bfa_t$ is identical to $\bfa$ but has an extra crossing inserted at position $t$ as described above, then $\bfa_t$ must be nearly reduced at $t$.  
Thus it makes sense to compute $\Bump_t^{\downarrow}\bfa_t$.  Applying parts (4) and (5) of Proposition~\ref{proposition:little bump properties}, we check that that $\Bump_t^{\downarrow}\bfa_t$ is a reduced word for $\pi_{m}$ which is necessarily nearly reduced at the position where the bump terminates.  We have thus proven the following lemma: 

\begin{lemma}
    Let $a$ be a reduced word for $\pi_{m-1}$.  For $0 \leq t \leq m-1$, let $a_t$ be the word such that $\Delete_t a_t = a$, and such that the crossing at position $t$ swaps $i_m$ with the wire above.  Then
    \[\IB_{\lambda_{m-1}}^{\lambda_{m}}\bfa = \sum_{t=1}^m \Bump_t^{\downarrow}\bfa_t.\]  
\end{lemma}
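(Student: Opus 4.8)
The plan is to establish the formula by unwinding the definition of the adjoint $\IB_{\lambda_{m-1}}^{\lambda_m}$ directly through the inner product. Since $\IB_{\lambda_{m-1}}^{\lambda_m}$ is characterized uniquely by the relation $\langle \BD_{\lambda_{m-1}}^{\lambda_m}\bfa, \bfa'\rangle = \langle \bfa, \IB_{\lambda_{m-1}}^{\lambda_m}\bfa'\rangle$, it suffices to verify that the proposed expression $\sum_{t=1}^m \Bump_t^{\downarrow}\bfa_t$ satisfies the same adjoint relation. Concretely, I would fix a reduced word $\bfc$ for $\pi_m$ and a reduced word $\bfa$ for $\pi_{m-1}$, and compare the two scalars $\langle \BD_{t_m}\bfc, \bfa\rangle$ and $\langle \bfc, \sum_{t=1}^m \Bump_t^{\downarrow}\bfa_t\rangle$. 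The first counts (with multiplicity) how many times $\bfa$ appears as a summand in the Bump-Delete expansion of $\bfc$; the second counts how many insertion positions $t$ yield $\Bump_t^{\downarrow}\bfa_t = \bfc$.

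First I would make precise the bijective correspondence between the intermediate stages of the two algorithms. The Bump-Delete algorithm on $\bfc$ pushes a crossing upward, recording a deletion $\Delete_t\bfa$ at each stage; by Proposition~\ref{proposition:little bump properties} part (5), every such deletion is a reduced word for $\pi_{m-1} = \pi_m\tau_{r_m,s_m}$. The claim is that each summand $\bfa$ arising this way corresponds to exactly one insertion position $t$ on the wire $i_m$ such that running $\Bump_t^{\downarrow}$ on the re-inserted word $\bfa_t$ recovers $\bfc$. The natural vehicle is the fact, recorded in the discussion preceding the lemma, that $\Bump^{\downarrow}$ and $\Bump^{\uparrow}$ are mutually inverse Little bumps: deleting the pushed crossing at an intermediate stage of the upward bump and then reinserting it is precisely the time-reversal that the downward bump undoes. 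I would formalize this by showing that the sequence of pushes performed by $\Bump_t^{\downarrow}\bfa_t$ exactly retraces, in reverse, the sequence of pushes that $\BD_{t_m}\bfc$ performed before deleting to produce $\bfa$.

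The key technical step is verifying that the re-inserted crossing lands on the correct wire, namely that it swaps $i_m$ with the wire above it, matching the hypothesis on $\bfa_t$. This follows because the upward Little bump terminates with a crossing on a specific wire (by Properties (3) and (5) governing descents and the reduced-word structure), and $i_m$ is exactly the row of $T$ containing $m$, which controls which wire is being crossed in passing from $\pi_{m-1}$ to $\pi_m$. I would invoke parts (4) and (5) of Proposition~\ref{proposition:little bump properties} to guarantee that $\Bump_t^{\downarrow}\bfa_t$ is a reduced word for $\pi_m$ and is nearly reduced at the termination position, so that the two inner products count matching objects and the coefficients agree term by term.

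The main obstacle I expect is the bookkeeping of multiplicities and the injectivity of the correspondence $t \mapsto \Bump_t^{\downarrow}\bfa_t$: one must rule out the possibility that two distinct insertion positions $t \neq t'$ produce the same word $\bfc$, and conversely that a single stage of $\BD_{t_m}\bfc$ contributes the summand $\bfa$ with the correct multiplicity one. This is where Property (1)---that no crossing moves more than once during a bump---becomes essential, since it guarantees that the intermediate words produced by the upward bump are distinct and that the reversal is unambiguous. Once injectivity and surjectivity of this stage-to-position correspondence are in hand, the equality of the two inner products for all $\bfc$ and $\bfa$ forces the claimed formula by the uniqueness of the adjoint.
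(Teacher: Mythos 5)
Your proposal takes essentially the same route as the paper. The paper's entire proof is the paragraph preceding the lemma: it checks that each $\bfa_t$ is nearly reduced at $t$, invokes parts (4) and (5) of Proposition~\ref{proposition:little bump properties} to see that $\Bump^{\downarrow}_t\bfa_t$ is a reduced word for $\pi_m$, and leaves the adjointness to the informal remark that the adjoint of a map defined by an algorithmic bijection is computed by time-reversal. Your inner-product coefficient matching makes that implicit step explicit, and your stage-to-position correspondence is the right mechanism.

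Two points in your outline need repair, though neither is fatal. First, you should \emph{not} try to ``rule out the possibility that two distinct insertion positions $t \neq t'$ produce the same word $\bfc$'': this genuinely happens, which is exactly why $\Lambda_T$ is a multigraph whose edge multiplicities $\langle \bfa, \BD_{\lambda_{m-1}}^{\lambda_m}\bfa'\rangle$ can exceed one. What you actually need --- and do state at the end --- is that the stages of $\BD_{t_m}\bfc$ whose deletion equals $\bfa$ are in bijection with the positions $t$ for which $\Bump^{\downarrow}_t\bfa_t = \bfc$: matching multiplicities, not multiplicity one. Second, ``deleting the pushed crossing \ldots{} and then reinserting it'' is not literally the reversal: reinserting the deleted crossing at its original height and bumping down does \emph{not} retrace the upward bump. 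The crossing specified by the hypothesis on $\bfa_t$ sits one row \emph{above} the deleted one (it swaps wire $i_m$ with the wire above it), and since Algorithm~\ref{algorithm:little bump} always performs at least one push before testing reducedness, the first down-push restores the deleted crossing to its original height; only then do the subsequent down-pushes undo the earlier up-pushes in reverse order. The retracing works because, by Lemma~\ref{lem:nearly_reduced}, each intermediate word of the upward bump is nearly reduced at exactly two positions --- the one just pushed and the current defect --- so the downward defect chain is forced back along the upward trajectory and terminates at the reduced word $\bfc$. With these two adjustments your argument is correct and amounts to a fleshed-out version of the paper's proof.
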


Figure~\ref{fig:one step of the shuffle} shows an example of how to calculate one of the terms in $\IB_{\lambda_{n-1}}^{\lambda_{n}}\bfa$, using the ``insert-bump'' procedure described above.  Specifically we are computing $\IB_{\lambda_{4}}^{\lambda_{5}}\bfa$, where:
\begin{itemize}
\item $\lambda_4$ is the partition $(2,1,1)$, corresponding to the dominant permutation $\pi_4 = 4213$,
\item $\lambda_5$ is the partition $(2,2,1)$, corresponding to the dominant permuation $\pi_5 = 4312$,
\item $T$ is an arbitrary standard Young tableau such that the entry 5 is in position (2,2), making $i_m = 2$,
\item $\bfa$ is the word (3,1,2,1),
\item the new crossing is inserted into position $t=1$.
\end{itemize}

It is helpful to visualize the operators in Definition~\ref{def:bump_delete and insert_bump} by drawing a ranked, directed multigraph $\Lambda_T$ as follows (see Figure~\ref{fig:macdonald shuffle graph}).  For $0 \leq m \leq k$, the vertices in the $m$th rank of $\Lambda_T$ are the reduced words for the dominant permutation $\pi_m$ (whose Rothe diagram is of shape $\lambda_m$).  All edges in $\Lambda_T$ are between consecutive ranks; $\Lambda_T$ has an edge of multiplicity $\langle \bfa, \BD^{\lambda_{m}}_{\lambda_{m-1}}\bfa'\rangle$ from $\bfa$ to $\bfa'$.

Figure~\ref{fig:macdonald shuffle graph} shows the instance of the graph $\Lambda_T$ corresponding to the standard tableau 
\[
T = \begin{matrix}
1 & 3 \\
2 & 5 \\
4
\end{matrix}
\]
The $i_m$, $1 \leq m \leq 5$, are equal to (1, 2, 1, 3, 2) respectively.  Rothe diagrams for the permutations $\pi_m$ appear at the left side of the diagram; to the right are reduced words $\pi_m$.  The dots on wire $i_m$ represent the position where the new crossing is inserted.  

  Note that the graph $\Lambda_T$ is not usually a lattice. For instance, in Figure~\ref{fig:macdonald shuffle graph}, the rightmost two words in the bottom row do not have a unique common meet, and the rightmost two words in the next-to-bottom row do not have a common join.  

\begin{definition}
\label{def:the big bijection}
Let $\mathcal{C}$ be the set of maximal-length paths in $\Lambda_T$, and let $\varepsilon$ denote the empty word.  Let $I_k$ be the set $\{1\} \times \{1,2\} \times \{1,2,3\} \times \cdots \times \{1,2,\ldots, k\}$. 
Let $\IB^T:I_k \rightarrow \mathcal{C}$ be the map which sends $(t_1, t_2, \ldots, t_k) \in I_k$ to the path in $\Lambda_T$ obtained by inserting crossings iteratively in positions $t_1, t_2, \ldots,t_k$ and bumping them down. 
\end{definition}

We are now able to prove Macdonald's formula bijectively.

\begin{proof}[Proof of Theorem~\ref{thm:macdonald}] $\IB^T$ is trivially a bijection; its inverse is obtained as follows: given a path in $\Lambda_T$ corresponding to a sequence of insertions and bumps, read off the locations $t_m$, $(1 \leq m \leq k)$ where the crossings were inserted.

Each of the maps $\IB_{\lambda_{k-1}}^{\lambda_{k}}$ preserves $\mu$, because their adjoints $\BD_{\lambda_{k-1}}^{\lambda_k}$ do.  In particular, $\mu(\bfa)$ counts the number of maximal-length paths in $\Lambda_T$ which end at $\bfa$, so the left hand side of Equation~\eqref{eqn:macdonald} is an enumeration of $\mathcal{C}$.  On the other hand, the right hand side of Equation~\eqref{eqn:macdonald}, $k!$, is equal to the size of $\mathcal{I}_k$ because all of the vertices of $\Lambda_T$ at rank $m$, $0 \leq m < k$, have outdegree $m+1$.
\end{proof}

For an example of the map $\IB^T$, please once again refer to Figure~\ref{fig:macdonald shuffle graph}.  The sequence $(1,2,2,1,3) \in I_n$ corresponds to one of the paths in $\Lambda_T$ which passes through the reduced words $\emptyset, (1), (2,1), (2,1,2), (3,2,1,2), (3,2,3,1,2)$.   

\section{The Markov growth process}
\label{sec:markov growth process}

In what follows, we shall suppress the standard tableau $T$ from our notation, and write $\IB = \IB^T$.

The map $\IB$ can be easily used to define the Markov growth process mentioned in the title of the paper.  This process can be viewed as a procedure for randomly generating reduced words $\bfa$ for a fixed dominant permutation $\pi$ of shape $\lambda$, wherein the probability of generating $\bfa$ is proportional to $\mu(\bfa)$.  The procedure for doing this is as straightforward as possible: perform simple random walk on $\Lambda_T$, starting at the empty word and ending at rank $n$.  

To see why this works, consider a ranked, directed graph $G$.  Let $\mathcal{P}_n$ be the set of $n$-step paths which start at the root $\varepsilon$ of $G$, and $\mathcal{V}_n$ be the set of vertices in $G$ of rank $n$. 

There are at least two obvious distributions on $\mathcal{V}_n$ which we must consider:

\begin{definition}
\label{definition:endpoint distributions}
Let $\mu^{\text{SRW}}_n$ be the endpoint of an $n$-step simple random walk in $G$.  Let $\mu^{\text{Uniform}}_n$ be the projection to $\mathcal{V}_n$ of the uniform distribution on $\mathcal{P}_n$.
\end{definition}

If we replace $\mathcal{V}^n$ with the vertex set of a general ranked digraph, then usually $\mu^{\text{SRW}}_n \neq \mu^{\text{Uniform}}_n$.  However, we have the following well-known fact:

\begin{lemma}
\label{lemma:uniform outdegree}
If the outdegree of every vertex $v \in \mathcal{V}_k$ is a constant $C(k)$ for all $k\leq n$, then 
$\mu^{\text{SRW}}_n = \mu^{\text{Uniform}}_n$.
\end{lemma}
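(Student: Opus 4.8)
The plan is to show that both distributions assign the same probability to every vertex $v \in \mathcal{V}_n$ by exhibiting an explicit formula for each, and then observing the formulas coincide under the constant-outdegree hypothesis. The key object is the path-count: for any vertex $v$, let $N(v)$ denote the number of directed paths from $\varepsilon$ to $v$ in $G$. I would prove by induction on the rank that $\mu^{\text{SRW}}_n(v) = N(v) / \prod_{k=0}^{n-1} C(k)$, using the fact that the SRW, upon arriving at a vertex of rank $k$, chooses uniformly among its $C(k)$ outgoing edges, so each individual path of length $n$ is traversed with the same probability $\prod_{k=0}^{n-1} C(k)^{-1}$. Summing over the $N(v)$ paths ending at $v$ gives the claimed formula.

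Next I would compute $\mu^{\text{Uniform}}_n(v)$. By definition this is the projection to $\mathcal{V}_n$ of the uniform measure on $\mathcal{P}_n$, so it assigns to $v$ the proportion of $n$-step paths that terminate at $v$, namely $N(v) / |\mathcal{P}_n|$. To finish I only need $|\mathcal{P}_n| = \prod_{k=0}^{n-1} C(k)$, which follows immediately from the constant-outdegree hypothesis: the number of $n$-step paths from the root is obtained by multiplying the number of choices available at each successive rank, and there are exactly $C(k)$ choices when leaving rank $k$. Hence $\mu^{\text{Uniform}}_n(v) = N(v) / \prod_{k=0}^{n-1} C(k) = \mu^{\text{SRW}}_n(v)$ for every $v$, which is the desired equality of distributions.

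The whole argument hinges on the elementary but essential point that constant outdegree forces every maximal path to carry equal SRW-probability. This is exactly where the general-digraph case breaks down: without the hypothesis, two paths to the same vertex may pass through vertices of differing outdegrees, so their probabilities $\prod_k C(v_k)^{-1}$ differ, and the SRW no longer weights paths uniformly. I anticipate the only genuine (minor) obstacle is bookkeeping the base case and the root's own outdegree cleanly -- one must be careful that the product $\prod_{k=0}^{n-1} C(k)$ indexes the ranks $0$ through $n-1$ through which a path departs, and that the hypothesis $k \leq n$ supplies constancy at precisely these ranks. Everything else is a direct double-counting of paths, so I would keep the write-up short and let the two formulas for $N(v)$ normalization do the work.
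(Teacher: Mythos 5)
Your proof is correct, and it takes a genuinely different (and in fact cleaner) route than the paper's. The paper argues by induction on the rank that the two distributions coincide, using the one-step recursion $\mu_{k+1}(v) = \frac{1}{D}\sum_{u \rightarrow v}\mu_k(u)$ satisfied by the simple random walk; you instead compute both measures in closed form, showing each equals $N(v)/\prod_{k=0}^{n-1}C(k)$ where $N(v)$ counts directed paths from $\varepsilon$ to $v$. Both arguments ultimately rest on the same two consequences of constant outdegree --- every individual $n$-step path carries SRW-probability $\prod_{k=0}^{n-1}C(k)^{-1}$, and $|\mathcal{P}_{k+1}| = C(k)\,|\mathcal{P}_k|$ --- but your version buys an explicit formula for the common distribution and makes transparent exactly where the general-digraph case fails, which the paper only asserts. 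It is worth noting that your route also sidesteps a misprint in the paper's printed chain of equalities, which ends with $\mu^{\text{SRW}}_{k+1}(v) = \frac{1}{D|\mathcal{V}_k|}$, ``which does not depend on $v$'': that final value is wrong in general (in $\Lambda_T$ the endpoint distribution is proportional to $\mu(\bfa)$, which varies across vertices of a given rank --- this variation is the whole point of the paper), and the intended conclusion must instead be $\mu^{\text{SRW}}_{k+1}(v) = \mu^{\text{Uniform}}_{k+1}(v)$ via the corresponding recursion for the uniform measure, which is exactly the identity your path-count formula encodes. One small bookkeeping caution: since $\Lambda_T$ is a multigraph, ``paths'' in your argument must be counted as edge sequences (so $N(v)$ incorporates edge multiplicities) and the walk must choose uniformly among outgoing edges counted with multiplicity; under that reading your argument goes through verbatim.
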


\begin{proof}
Induction on $n$, the base case $n=0$ being trivial.  Suppose that $\mu^{\text{SRW}}_{k} = \mu^{\text{Uniform}}_{k}$ for some $k \geq 0$; let $v$ be a vertex in $\mathcal{V}^{k+1}$.  Write $D$ for the outdegree of all vertices at rank $k$.  Then 
\[
    \mu^{\text{SRW}}_{k+1}(v) 
    = \frac{1}{D}\sum_{u \rightarrow v}\mu^{\text{SRW}}_{k}(u)
    = \frac{1}{D}\sum_{u \rightarrow v}\mu^{\text{SRW}}_{k}(u)
    = \frac{1}{D}\sum_{u \rightarrow v}\mu^{\text{Uniform}}_{k}(u)
    = \frac{1}{D|\mathcal{V}_k|}, 
\]
which does not depend on $v$.  Thus $\mu^{\text{SRW}}_{k+1} = \mu^{\text{Uniform}}_{k+1}$, which completes the inductive step.
\end{proof}

\begin{corollary}
\label{corollary:simple random walk}
Simple random walk on $\Lambda$ produces the word $\bfa \in \red(\lambda)$ with probability proportional to $\mu(\bfa)$.
\end{corollary}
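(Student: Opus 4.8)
The plan is to deduce the corollary directly from Lemma~\ref{lemma:uniform outdegree} together with the outdegree computation already carried out in the proof of Theorem~\ref{thm:macdonald}. The key observation is that $\Lambda = \Lambda_T$ satisfies exactly the hypothesis of that lemma: every vertex at rank $m$, for $0 \leq m < k$, has outdegree $m+1$, since from any reduced word for $\pi_m$ the new crossing on wire $i_{m+1}$ may be inserted in any of the $m+1$ available positions. Hence the outdegree, although it varies with the rank, is constant across each fixed rank, with $C(m) = m+1$, and Lemma~\ref{lemma:uniform outdegree} applies with $G = \Lambda_T$ and $n = k$ to give $\mu^{\text{SRW}}_k = \mu^{\text{Uniform}}_k$.

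Having this equality, I would reduce to computing $\mu^{\text{Uniform}}_k(\bfa)$, which by definition is the number of maximal-length paths ending at $\bfa$ divided by the total number of maximal-length paths. Both quantities are already known: the proof of Theorem~\ref{thm:macdonald} shows that the number of paths terminating at $\bfa$ is exactly $\mu(\bfa)$, since each $\IB_{\lambda_{m-1}}^{\lambda_m}$ preserves $\mu$, and that the total number of maximal-length paths equals $|I_k| = k!$. Therefore $\mu^{\text{Uniform}}_k(\bfa) = \mu(\bfa)/k!$, and consequently $\mu^{\text{SRW}}_k(\bfa) = \mu(\bfa)/k!$, which is proportional to $\mu(\bfa)$ with constant of proportionality $k!$.

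There is no serious obstacle; the corollary merely repackages facts already in hand. The one point requiring care is the indexing: Lemma~\ref{lemma:uniform outdegree} is phrased for a generic ranked digraph with root $\varepsilon$ and target rank $n$, so one must confirm that the empty word is the unique rank-$0$ vertex serving as the root, and that the top rank $k$ of $\Lambda_T$ plays the role of $n$, so that the constant-outdegree hypothesis is needed only at ranks $0, \ldots, k-1$ — which is precisely the range in which it holds.
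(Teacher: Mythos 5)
Your proposal is correct and follows essentially the same route as the paper's own proof: invoke the constant-per-rank outdegree $m+1$ (coming from the $m+1$ insertion positions on the wire), apply Lemma~\ref{lemma:uniform outdegree}, and identify the uniform path-endpoint distribution with $\mu(\bfa)/k!$ via the path counts from the proof of Theorem~\ref{thm:macdonald}. You merely spell out the identification of $\mu^{\text{Uniform}}_k(\bfa)$ a bit more explicitly than the paper does, which is harmless.
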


\begin{proof}
    The outdegree of $\Lambda_T$ at the $n$th rank is $n+1$, the number of places on each wire where a new crossing can be inserted into a word with $n$ crossings.  Lemma~\ref{lemma:uniform outdegree} then says that simple random walk on $\Lambda_T$ ends at $\bfa$ with probability $\mu(\bfa) / \sum_{a' \in \red(pi)} \mu(\bfa')$, where $\pi$ is the dominant permutation whose diagram is the shape of $T$. 
\end{proof}

We note that it is not in fact necessary to construct the entire (exponentially large) graph $\Lambda_T$ in order to use the simple random walk algorithm to sample from $\mu$.  Instead, we repeatedly insert crossings at a uniformly randomly chosen point on the appropriate wire, and bump them down until the word is reduced.  As such the only information which needs to be computed is the sequence of reduced words, and indeed none of these need to be retained in memory except for the most recent one.  This allows for rather efficient sampling from the Macdonald distribution.

Alexander Holroyd and the author have written an efficient sampling algorithm of the Markov growth process in python.  Figure~\ref{fig:big wiring diagram} shows the output of this implementation: it is the wiring diagram of a reduced word from the reverse permutation in $S_{600}$ (only a few wires are shown).

\begin{figure}
    \caption{Wiring diagram for a reduced word for the reverse permutation in $S_{600}$, chosen according to the Macdonald distribution.  Black dots are the locations of the crossings.  Wires $1, 50, 100, 250, 300, \ldots, 600$ are shown in red; all other wires are suppressed.
    \label{fig:big wiring diagram}}
\begin{center}
    \includegraphics[width=5.5in]{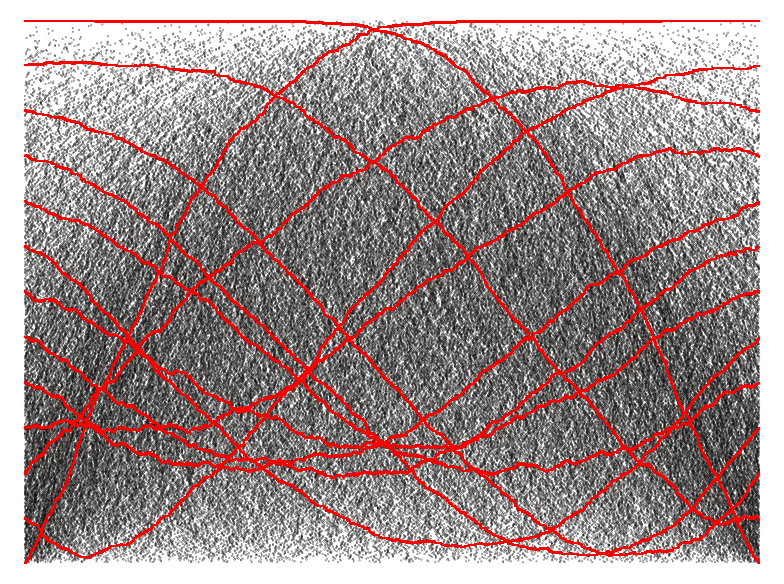}
\end{center}
\end{figure}

A few features are notable in Figure~\ref{fig:big wiring diagram}.  The trajectory of particle 1 and of particle 600 appear to be tending towards a deterministic shape.  The other trajectories vary with different runs of the algorithm, though they do also appear to be tending toward a family of smooth curves.  Likewise, the density of the swaps appears to be tending towards some smooth limiting function.

Similar phenomena occur in the case of \emph{uniformly random sorting networks}~\cite{angel-holroyd-romik-virag}, though there the authors were able to prove and conjecture much more about the limiting behavior of the process.  For instance:
\begin{itemize} 
\item Conjecturally~\cite{angel-holroyd-romik-virag} there is an explicit family of limiting curves (certain sinusoids) which the trajectories seem to approach. It is shown that the curves satisfy a H\"older condition.  Nothing similar is known for the Macdonald distribution.
\item Due to the existence of a measure-preserving action of the cyclic group, it was possible to calculate the limiting density function for the swap locations.  We do not yet have a conjectural formula for any of these quantities in our distribution.  
\item Again conjecturally~\cite{angel-holroyd-romik-virag}, the partial permutations (see Section~\ref{subsec:permutations}) matrices have a certain limiting form: the positions of the ones in their permutation matrices seem to be distributed according to the so-called \emph{Archimedes distribution}.  A much weaker form of this statement is proven in ~\cite{angel-holroyd-romik-virag} (namely, that certain triangular regions of the permutation matrices are almost surely filled with zeroes).  We were not able to formulate such conjectures here.  The ``middle'' partial permutation matrix for $\mu$-distributed sorting networks (that is, the permutation matrix for the product of the first half of the transpositions) appears at first glance to be distributed according to ``half of'' the Archimedes-distribution: the points appear to be supported on a half-ellipse-shaped region.  However, starting at approximately $n=400$ it becomes evident that the shape is \emph{not} close to a half-ellipse; the sides are somewhat flattened.  See Figure~\ref{fig:halfway}.
\end{itemize}
\begin{figure}
\caption{The permutation matrix for the product of the first $\frac{1}{2}\binom{600}{2}$ transpositions in the reduced word shown in Figure 4.  Ones are shown as dots and zeros are omitted. \label{fig:halfway}}  
\begin{center}
\includegraphics[height=4in]{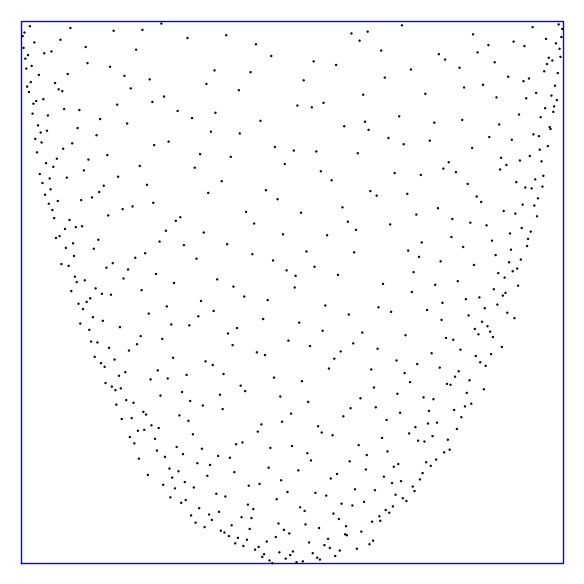}
\end{center}
\end{figure}
We would like to have even conjectural descriptions of any limiting behavour of this model.  The techniques used in~\cite{angel-holroyd-romik-virag} for the uniform distribution on reduced words do not immediately apply to the Macdonald distribution, as they rely on properties of the inverse Edelman-Greene bijection.  We do not yet have replacements for this technical tool which apply to the Macdonald distribution.

\section{The Fomin-Kirillov measure $\mu_x$}
\label{sec:fomin-kirillov}
In this section, we partially extend our results to the generalization of Macdonald's formula which was considered by Fomin-Kirillov~\cite{fomin-kirillov}.

\begin{definition} (Fomin-Kirillov weight)
    Let $\bfa = (a_1, a_2, \ldots)$ be a word and let $x \in \mathbb{N}$.  Define
\[
    \mu_x(\bfa) = \prod_t (x + a_t).
\]
\end{definition}

Given $\bfa$, a word for the dominant permutation $\pi \in S_n$, we can form the shifted word $\bfabarbar$ by replacing each $a_t \in \bfa$ with $a_t+x$ (equivalently, by adding $x$ new wires above the wiring diagram of $\bfa$).  In the notation of ~\cite{fomin-kirillov}, $\bfabarbar$ is a reduced word for the permutation $1^x \times \pi$, which fixes points 1 through $x$ and acts on the following points as $\pi$ acts on $1, \ldots, n$.  

Observe that $\mu_x(\bfa) = \mu(\bfabarbar)$, so the maps $\PD$ preserves $\mu_x(\bfa)$.  That is, the statment of Proposition~\ref{proposition:push delete preserves FK measure} holds even when $\mu$ is replaced by $\mu_x$.  However, Proposition \ref{proposition: bump delete map preserves fk measure} fails to hold, for the following reason: once a crossing is bumped to position 0, its weight under $\mu_x$ is not 0, but rather $x$.  

One way to fix this problem is to modify the way $\BD$ acts on $\bfabarbar$: after the bump terminates, apply $\PD$ $x$ more times, until the last crossing moved \emph{is} on wire zero.  Then define a new map $\BD_{t}^{x}:\red(1^x \times S_n) \rightarrow \red(1^x \times S_n)$, as follows:
\begin{definition}
Let $\bfa$ be a reduced word for a dominant permutation, which is nearly reduced at $t$. Suppose $\Bump_t^{\uparrow}\bfabarbar$ terminates after pushing the crossing in location $t'$.  Let $\BD_t^x(\bfabarbar) = \BD_t(\bfabarbar) + \left(\PD_{t'}\right)^x \Bump_t^{\uparrow}\bfabarbar$.
\end{definition}
It is now immediate from Proposition~\ref{proposition: bump delete map preserves fk measure} that $\BD_t^x$ preserves $\mu_0(\bfabarbar)=\mu_x(\bfa)$.  Note that all summands in this map
are reduced words for permutations in $1^x \times S_n$, so we will abuse notation in what follows and say that $\BD_t^x$ acts on $S_n$.

We may now begin to carry out the constructions of Section~\ref{sec:bijection}, using $\BD^x$ in place of $\BD$.  Fix a standard Young tableau $T$ of shape $\lambda$, corresponding to a sequence of Young diagrams $\emptyset = \lambda_0, \lambda_1, \ldots, \lambda_k$.  We can again define maps $\BD_{\lambda_{m-1}}^{x,\lambda{m}}$ in terms of $\BD_t^x$; we may define the insert-bump map $\IB^T$ in terms of the adjoints of the $\BD_t^x$, and finally we may associate a graph $\Lambda^x_T$ as before.  

We have the following characterization of $\Lambda^x_T$:

\begin{proposition}
\label{prop:lambda x}
The multiset of edges of $\Lambda^x_T$ coincides with that of $\Lambda_T$.  The multiplicities of the edges are the same, with the following exception: each edges arising from inserting a crossing at the \emph{top position} of $\bfa$ corresponds to $x+1$ edges in $\Lambda^x_T$. $\hfill \square$
\end{proposition}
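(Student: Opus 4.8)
The plan is to compare the edge multisets of $\Lambda^x_T$ and $\Lambda_T$ directly, using the defining relation
\[
\BD_t^x(\bfabarbar) = \BD_t(\bfabarbar) + \left(\PD_{t'}\right)^x \Bump_t^{\uparrow}\bfabarbar
\]
together with the observation that the shift $\bfa \mapsto \bfabarbar$ intertwines the ordinary bump-delete with itself. First I would note that the two graphs have the same vertices: under the identification $\red(\pi_m) \cong \red(1^x \times \pi_m)$ given by the shift, the rank-$m$ vertices of both graphs are the reduced words for $\pi_m$. Since edge multiplicities in $\Lambda_T$ and $\Lambda^x_T$ are read off as the coefficients of $\BD$ and $\BD^x$ respectively, it then suffices to compare $\BD_t(\bfabarbar)$ with $\BD_t^x(\bfabarbar)$ summand by summand.

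Next I would show that the first term $\BD_t(\bfabarbar)$ reproduces every edge of $\Lambda_T$ with its original multiplicity. The only crossing that reaches row $x$ is the terminal one, created by the final, unrecorded push; every crossing recorded by Algorithm~\ref{algorithm:bump delete} applied to $\bfabarbar$ therefore stays in rows $\ge x+1$. Hence the top $x$ wires remain uninvolved throughout the recorded portion, $\BD_t(\bfabarbar)$ is literally the shift of $\BD_t(\bfa)$, and its summands are exactly the shifts of the summands of $\BD_t(\bfa)$ -- that is, the edges of $\Lambda_T$.

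The core of the argument is the analysis of the correction term. Writing $w = \Bump_t^{\uparrow}\bfabarbar$, Proposition~\ref{proposition:litle bump on dominant permutations} in shifted coordinates puts the terminal crossing of $w$ at some position $t'$ and height $x$, lying among the otherwise empty top wires; hence each further push $(\Push^{\uparrow}_{t'})^j w$ is again reduced and the defect search never re-engages. Using $\Delete_{t'}\Push_{t'}^{\uparrow} = \Delete_{t'}$ on this crossing, the $x$ deletions recorded by the extra $\PD$ steps all collapse to a single word $\mathbf{b}_0 := \Delete_{t'} w$, so that
\[
\left(\PD_{t'}\right)^x w = x\,\Delete_{t'} w + (\Push_{t'}^{\uparrow})^x w.
\]
The last summand has its terminal crossing pushed to height $0$, so it involves wire $0$, has $\mu$-weight zero, and is not a vertex; it contributes no edge, exactly as the $\Bump^{\uparrow}$ overshoot is discarded when building $\Lambda_T$. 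Thus the correction contributes precisely $x$ extra copies of the single word $\mathbf{b}_0$.

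Finally I would identify $\mathbf{b}_0$ with the top-position insertion edge. Because $t'$ is the position of the crossing sent to row $x$ on the last push, and $\Delete_{t'}\Push_{t'}^{\uparrow}v = \Delete_{t'}v$ for any word $v$ with a crossing at $t'$, the word $\mathbf{b}_0 = \Delete_{t'} w$ equals the \emph{last} deletion recorded by $\BD_t(\bfabarbar)$; equivalently, $\mathbf{b}_0$ is the word obtained from $\bfabarbar$ by the up-bump that reverses ``insert a crossing at the top of wire $i_m$ and bump down.'' Hence the base edge $\mathbf{b}_0 \to \bfabarbar$ already present in $\Lambda_T$ is exactly the top-position insertion edge, of multiplicity one, and adding the $x$ correction copies promotes it to multiplicity $x+1$, while every other edge keeps its $\Lambda_T$ multiplicity. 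I expect this last step to be the main obstacle: making rigorous that the word produced by the correction coincides with the target of the top-position insertion, rather than a different or spurious edge, and that this edge has base multiplicity exactly one. This rests on the identity $\Delete_{t'}\Push_{t'}^{\uparrow} = \Delete_{t'}$ combined with the inverse relationship between top insertion followed by $\Bump^{\downarrow}$ and the up-bump terminating at the zeroth row, which I would verify by tracking the terminal crossing through the final step of Algorithm~\ref{algorithm:little bump} and invoking the adjointness of $\IB$ and $\BD$.
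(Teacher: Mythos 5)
Your proof is correct and takes essentially the same route as the paper's: the paper likewise identifies the only new terms in $\BD^x_t$ as $x$ copies of the single word obtained by deleting the terminal crossing from wire $x$ of $\bfabarbar$, and then reverses time to recognize these as insertions atop the top wires, i.e.\ as $x$ extra copies of the top-position insertion edge. Your write-up simply makes explicit several steps the paper's three-sentence proof leaves implicit, namely that the shift intertwines $\BD$, the collapse of the $x$ recorded deletions via $\Delete_{t'}\Push_{t'}^{\uparrow}=\Delete_{t'}$, and the fact that the fully pushed word at height $0$ contributes no edge.
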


\begin{proof}
The only new terms in $\BD^x_t$ which were not present in $\BD_t$ are the multiplicity-$x$ reduced words which arise from the action of $\left(\PD_{t'}\right)^x \Bump_t^{\uparrow}\bfabarbar$: these are $x$ copies of the same word, which can be obtained by deleting a crossing from wire $x$ of $\bfabarbar$.  Reversing time, inserting a crossing atop any of the first $x$ wires, and bumping it down repeatedly will give rise to such a crossing.
\end{proof}

If the edges are counted with the multiplicities in Proposition~\ref{prop:lambda x}, it is still the case that the Fomin-Kirillov weight of $\bfa$ is equal to the number of directed paths from the empty word to $\bfa$ in $\Lambda^x_T$.

Unfortunately, $\Lambda^x_T$ no longer has constant outdegree at each rank, so the equivalent of Corollary~\ref{corollary:simple random walk} fails to hold.
Of course, using results of~\cite{winkler}, there does exist a Markov chain on $\Lambda^x_T$ which samples from this distribution, but it is not an obvious modification of simple random walk on $\Lambda^x_T$.  In particular, in lieu of further insight, it would be necessary to construct all of $\Lambda^x_T$ in order to draw sample from $\mu_x$.  This is very inefficient.  Likewise, a new idea is needed in order to interpret the right hand side of the Fomin-Kirillov identity in terms of this lattice.  As such, the results of this paper are not adequate to prove any of the identities in~\cite{fomin-kirillov}, nor to sample from $\mu_x$.  We will address these issues in future work.

Lastly, we make a few comments about the limiting case $x \rightarrow \infty$.  The graph $\Lambda^x_T$ can be obtained from the graph of $\Lambda_T$ by adding new edges, where the only dependence on $x$ appears in the weight of the new edges added, as remarked above.  If we allow $x$ to be a very large integer, almost all of the edges in $\Lambda^x_T$ are these ``new'' ones; if we rescale the weights on the edges so that the new edges have weight one, then the limiting object $\Lambda^{\infty}_T$ is thus the \emph{unweighted} graph which consists of only the new edges.  

It is natural to ask what the structure of this graph is.  The answer was essentially worked out by Little~\cite{little2}.  Each of the vertices in $\Lambda$ is a reduced word which corresponds, under the Edelman-Greene correspondence (or equivalently under Little's bijection) to a standard tableau.  Interpret these tableaux as insertion tableaux under the classical Robinson-Schensted correspondence.  Then there is an edge from $T$ to $T'$ in $\Lambda^{\infty}_T$ when it is possible to perform a Robinson-Schensted insertion on $T$ to obtain $T'$.


\section{Future work}
\label{sec:future work}

Our algorithm has a parameter $T$ whose role we have not considered at all.  This $T$ is a standard tableau of shape $\lambda$ and it controls the wires on which insertions are done; any choice of $T$ will yield to a bijective proof of Macdonald's identity.  It would be quite interesting to learn what role $T$ plays in the bijection.  Perhaps one can prove stronger results by choosing $T$ cleverly. 

One major extension of these results will be an extension of these results to the case of non-dominant permutations, as well as bijective proofs of the identities in Fomin-Kirillov~\cite{fomin-kirillov}; this is the subject of current investigation.

One would like to study the properties of large Macdonald-distributed reduced words for the longest permutation.  These were called ``sorting networks'' in~\cite{angel-holroyd-romik-virag}, where the uniform distribution was studied in place of the Macdonald distribution.  Even better would be a study of Fomin-Kirillov-distributed reduced words, as the Fomin-Kirillov weight interpolates between the uniform weight and the Macdonald weight.  

All of the identities mentioned in this paper have $q$-analogues.  The $q$-analogue of Macdonald's identity was conjectured by Macdonald~\cite[Equation $(6.11)_q$]{macdonald}, and proven by Fomin and Stanley~\cite{fomin-stanley} using the nilCoxeter algebra.  It would be very interesting to prove these identities bijectively and we believe that the methods established here will likely apply.

More broadly, in the future we hope to develop bijective proofs of other identites of Schubert polynomials, perhaps relying on variants of the Little bijection.

\bibliographystyle{plain}
\bibliography{macdonald}

\end{document}